\newtheorem{theorem}{Theorem}[section]
\newtheorem{lemma}[theorem]{Lemma}
\newtheorem{remark}[theorem]{Remark}
\renewenvironment{proof}[1][Proof]{\noindent\textbf{#1.} }{\ \rule{0.5em}{0.5em}}
\newcommand{\G}{{\mathrm{G}}}
\begin{document}

\title[Left invariant complex Finsler metrics on a complex Lie group]{Left invariant complex Finsler metrics on a complex Lie group}
\author{Xiyun Xu}
\address[Xiyun Xu] {School of Mathematical Sciences,
Capital Normal University,
Beijing 100048,
P.R. China}
\email{2210502058@cnu.edu.cn}
\author{Ming Xu}
\address[Ming Xu] {Corresponding author, School of Mathematical Sciences,
Capital Normal University,
Beijing 100048,
P.R. China}
\email{mgmgmgxu@163.com}

\begin{abstract}
In this paper, we consider a left invariant complex Finsler metric $F$ on a complex Lie group. Using the technique of invariant frames, we prove the following properties for $(G,F)$. First, the metric $F$ must be a complex Berwald metric. Second, its complex spray $\chi=w^i\delta_{z^i}$ on $T^{1,0}G\backslash0$ can be extended to a holomorphic tangent field on $T^{1,0}G$. If we view $\chi$ as a real tangent field on $TG$, it coincides with the canonical bi-invariant spray structure on $G$. Third,
we prove that the strongly K\"{a}hler, K\"{a}hler, and weakly K\"{a}hler properties for $F$ are equivalent. More over, $F$ is K\"{a}hler if and only if $G$ has an Abelian Lie algebra. Finally, we prove that the holomorphic sectional curvature vanishes.

Mathematics Subject Classification(2010): 22E46, 53C30
\vbox{}
\\
Keywords: Complex Berwald property, complex Finsler metric, complex Lie group, holomorphic sectional curvature, K\"{a}hler property 
\end{abstract}

\maketitle

\section{Introduction and main results}

The research on complex Finsler geometry has a history of almost sixty years
\cite{Fu1989}\cite{Ru1967}, in which the discovery of the Kobayashi metric
marks a milestone \cite{Le1981}\cite{Ko1975}. Since the 1990's,
complex Finsler geometry has been systematically explored by T. Aikou, M. Abata et al \cite{Ab1996}\cite{Ai1991}. Its connections to complex geometry and algebraic geometry have been revealed \cite{Ai2004}\cite{FLW2019}.  The K\"{a}hler properties, complex Berwald property, the holomorphic sectional curvature, etc.,
have been extensively studied \cite{CS2009}\cite{LQZ2024}\cite{LLQZ2025}\cite{Wa2019}\cite{XZ2015}\cite{Ya2008}.

The motivation of this paper is to generalize the homogeneous Finsler geometry \cite{De2012} to the complex context. Unlike the method in \cite{CGZ2024}, using special local coordinate in the calculation, we pursue a more algebraic approach which is inherited from our previous works  \cite{Xu2022-1}\cite{Xu2022}\cite{Xu2023}.
We start with a special case, i.e., we consider a left invariant complex Finsler metric $F$ on a complex Lie group $G$. In this paper, we automatically assume a complex Finsler metric is pseudo-convex.

The thought in \cite{Xu2022-1} help us construct the invariant frames on $T^{1,0}G$, which is as follows. Let $\{e_i,\forall 1\leq i\leq n\}$ be a basis for the complex Lie algebra $\mathfrak{g}=\mathrm{Lie}(G)$. Then we have left and right invariant holomorphic tangent fields, $U_i$ and $V_i$ respectively, such that the values of $U_i$ and $V_i$ at the identity $e\in G$ coincide with $e_i\in\mathfrak{g}=T^{1,0}G$. Correspondingly, we have holomorphic functions $u^i$ and $v^i$ on $T^{1,0}G$ such that each vector in $T^{1,0}_gG$ can be simultaneously presented as $u^iU_i(g)$ and $v^iV_i(g)$. In each $T^{1,0}_gG$, we have $\partial_{u^i}$ and $\partial_{v^i}$, which can be globalized to holomorphic tangent fields
on $T^{1,0}G$. Meanwhile, we can lift $U_i$ and $V_i$ to $\widetilde{U}_i$ and $\widetilde{V}_i$ respectively, which are also holomorphic tangent fields on $T^{1,0}G$. Then we get the left invariant frame $\{\widetilde{U}_i,\partial_{u^i},\forall i\}$ and the right invariant frame
$\{\widetilde{V}_i,\partial_{v^i},\forall i\}$.

Though the story seems similar here, we must be aware that there exist subtle differences between real and complex situations. Fortunately
the holomorphic property helps us overcome all obstacles. We prove that
the previously described complex invariant frames share similar properties as their real analogs in \cite{Xu2022-1} (see the lemmas in Section 3). As they are rarely seen in literature, we provide detailed proofs for all the preparation lemmas.

Using above invariant frames, we prove the following main theorems.
First, we prove

\begin{theorem}\label{main-thm-1}
A left invariant complex Finsler metric on a complex Lie group satisfies the complex Berwald property.
\end{theorem}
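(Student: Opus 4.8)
The plan is to prove that a left invariant complex Finsler metric $F$ on a complex Lie group $G$ is a complex Berwald metric, which amounts to showing that the complex spray coefficients $G^i$ (equivalently the complex Berwald/Chern--Finsler connection coefficients) are holomorphic and independent of the fiber direction in an appropriate invariant sense, or more directly that the connection is linear. Concretely, the complex Berwald property means the complex spray coefficients $G^i(z,w)$ are quadratic (holomorphic quadratic forms) in the fiber variable $w$, or equivalently that the associated connection descends to a connection on the underlying manifold. My strategy is to exploit the invariant frames $\{\widetilde{U}_i,\partial_{u^i}\}$ constructed in Section~3 and rewrite everything in the coordinates $u^i$ adapted to the left invariant field $U_i$.

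First I would express the fundamental tensor of $F$ in the left invariant frame. Because $F$ is left invariant and the $U_i$ are left invariant holomorphic tangent fields, the fundamental function $F(g, u^iU_i(g))$ depends only on the fiber coordinates $u^i$ (and $\bar u^i$) and not on the base point $g\in G$; that is, $F$ pulled back through the left trivialization $T^{1,0}G \cong G\times\g$ is a function $L(u,\bar u)$ of the fiber alone. This is the key reduction: left invariance kills all dependence on the base, so the metric tensor $G_{i\bar j}=\partial_{u^i}\partial_{\bar u^j}L$ and all its derivatives in the $u$-direction are functions of $u$ only, constant along each left invariant field.

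Next I would compute the complex spray coefficients from this trivialized data. The complex spray (or Chern--Finsler) coefficients are built from derivatives of $G_{i\bar j}$ with respect to the base coordinates $z^k$ and the fiber coordinates, contracted against the fiber vector $w$. The point is that the base derivatives $\delta_{z^k}$ acting on an object that is constant in the left trivialization produce only the structure-constant terms coming from the Lie bracket (the nonholonomy of the frame $\{U_i\}$, i.e.\ $[U_i,U_j]=c_{ij}^k U_k$). Using the lemmas in Section~3 relating $\widetilde{U}_i$, $\partial_{u^i}$ and the holomorphicity of the trivialization, these contributions assemble into spray coefficients that are holomorphic in $w$ and at most quadratic, with coefficients built from the structure constants alone and hence independent of the base point. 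Checking that the nonholomorphic $\bar w$ and $\bar z$ contributions cancel—so that the spray coefficients are genuinely holomorphic, which is precisely the complex Berwald criterion—is where the ``holomorphic property helps us overcome all obstacles'' remark of the introduction becomes essential.

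I expect the main obstacle to be the careful bookkeeping that distinguishes the complex case from the real one: one must verify that the mixed derivatives involving $\bar u$ do not spoil holomorphicity of the spray, and that the antiholomorphic part of the connection vanishes. Specifically, the complex Berwald condition requires the horizontal derivative $\delta_{z^k}G_{i\bar j}$ to produce terms organized so that $G^i$ is holomorphic in $w$; I would establish this by showing, via the Section~3 lemmas, that $\widetilde{U}_i$ annihilates $L(u,\bar u)$ in the relevant direction and that the bracket relations among the invariant fields contribute only holomorphic structure-constant terms. Once the spray coefficients are exhibited as holomorphic functions depending on the base only through left invariant quantities—so in fact constant in the trivialization up to quadratic dependence on $w$—the complex Berwald property follows immediately, since Berwald precisely asserts that these coefficients are holomorphic and the connection is ``linear'' independent of direction.
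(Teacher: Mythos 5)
Your outline — trivialize $T^{1,0}G$ by an invariant frame, use left invariance to kill the base dependence of $\G$, and read off that $\Gamma^i_{j;k}$ has no $w$-dependence — is indeed the paper's strategy. But one of your two announced key lemmas is false: $\widetilde{U}_i$ does \emph{not} annihilate the trivialized metric $L(u,\bar u)$. By Lemma \ref{lemma-8}(5), $\widetilde{U}_iu^p=c^p_{li}u^l$, while $\widetilde{U}_i$, being a holomorphic $(1,0)$-field, kills the antiholomorphic functions $\overline{u}^p$; hence $\widetilde{U}_i\G=c^p_{li}u^l\partial_{u^p}\G$, which vanishes identically only when $\g$ is Abelian — indeed the same expression, written in $v$-coordinates, is exactly the quantity that the paper's proof of Theorem \ref{main-thm-2} shows to force an Abelian Lie algebra via $\Ad(G)$-invariance and the Binet--Legendre construction. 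What is true, and what the paper uses (Lemma \ref{lemma-5}), is $\widetilde{V}_i\G\equiv0$: left translations are generated by \emph{right} invariant fields, so it is the right invariant frame that is adapted to left invariance; this is why the paper expands $\partial_{z^k}$ in $\{\widetilde{V}_a,\partial_{v^b}\}$ in \eqref{014}.

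The error is repairable within your left-frame setup, and repairing it also supplies the cancellation you assert but never exhibit. Using Lemma \ref{lemma-12} and the corrected identity above,
\begin{eqnarray*}
\partial_{z^k}\G=B^a_k\widetilde{U}_a\G+u^aU_a(B^p_k)\partial_{u^p}\G
=(B^a_kc^p_{ta}u^t+u^aU_a(B^p_k))\partial_{u^p}\G,
\end{eqnarray*}
and since the coefficient in parentheses is holomorphic, $\partial_{\overline{w}^l}=\overline{B^r_l}\partial_{\overline{u}^r}$ passes through it and converts $\partial_{u^p}\G$ into the Hessian $\mathcal{G}_{p\overline{r}}$, which the inverse metric in $N^i_k$ then cancels exactly:
\begin{eqnarray*}
N^i_k=\G^{\overline{l}i}\partial_{\overline{w}^l}\partial_{z^k}\G
=\overline{A^l_p}A^i_q\mathcal{G}^{\overline{p}q}\,\overline{B^r_l}
(B^a_kc^s_{ta}u^t+u^aU_a(B^s_k))\mathcal{G}_{s\overline{r}}
=A^i_s(B^a_kc^s_{ta}u^t+u^aU_a(B^s_k)),
\end{eqnarray*}
the analog of \eqref{014}--\eqref{017}. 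This is the decisive point your proposal glosses over: the term $u^aU_a(B^p_k)\partial_{u^p}\G$ genuinely involves the metric, and without the Hessian cancellation it is not justified to say that ``only structure-constant terms survive''; a priori $\Gamma^i_{j;k}=\partial_{w^j}N^i_k$ could inherit $w$-dependence from third derivatives of $\G$. Once the cancellation is in place, $N^i_k$ is linear in $w$ with coefficients depending on $z$ alone, and the Berwald property follows. Two smaller corrections: the coefficients are \emph{not} independent of the base point in natural coordinates (they depend on $z$ through the frame matrices — Berwald only requires $w$-independence), and ``$N^i_k$ holomorphic in $w$'' is not by itself the Berwald criterion; you need it together with the $(1,0)$-homogeneity of $N^i_k$ to force linearity in $w$.
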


Theorem \ref{main-thm-1} reveals an intrinsic difference between the real Berwald property and the complex Berwald property. As I know, homogeneous real Finsler metrics which are Berwald but not Riemannian are relatively rare.

Second, we consider the complex spray $\chi=w^i\delta_{z^i}$ for the left invariant complex Finsler metric $F$ on a complex Lie group $G$. We prove

\begin{theorem}\label{main-thm-4}
For a left invariant complex Finsler metric on a complex Lie group $G$, its complex spray $\chi=w^i\delta_{z^i}$ can be extended to the holomorphic tangent field $\chi=u^i\widetilde{U}^i=v^i\widetilde{V}^i$ on $T^{1,0}G$. Further more, when $\chi$ is viewed as a real
tangent field on $TG$, it coincides the canonical bi-invariant spray structure on $G$.
\end{theorem}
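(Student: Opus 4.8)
The plan is to prove Theorem~\ref{main-thm-4} in two parts, matching its two assertions: first that the complex spray extends holomorphically with the claimed invariant expressions, and second that the real tangent field underlying it is the bi-invariant geodesic spray of $G$.
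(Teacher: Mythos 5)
Your proposal is not a proof: it only restates the two assertions of Theorem \ref{main-thm-4} as a two-part plan and supplies no argument for either part. Every substantive step is missing. For the first assertion, the essential point is a concrete computation of the Chern--Finsler coefficients $N^i_k$ in the right invariant frame: writing $\partial_{z^k}=D^a_k\widetilde{V}_a+v^aV_a(D^b_k)\partial_{v^b}$ (Lemma \ref{lemma-12}) and using the left invariance of the metric, encoded as $\widetilde{V}_a\G\equiv 0$ and $\widetilde{V}_i\mathcal{G}_{j\overline{k}}\equiv 0$ (Lemma \ref{lemma-5}, Lemma \ref{lemma-13}), one finds $N^i_k=C^i_b\,v^aV_a(D^b_k)$, so that in $\chi=w^k(\partial_{z^k}-N^i_k\partial_{w^i})$ the metric-dependent terms cancel exactly and $\chi=v^i\widetilde{V}_i$, which is visibly holomorphic on all of $T^{1,0}G$ (including the zero section). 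Without this cancellation there is no reason the spray of a Finsler metric should extend over the zero section at all, so the extension claim cannot be waved through; it is where the left invariance of $F$ actually enters, and it is also what makes the limit independent of the metric.

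For the second assertion you would still need to pass from the complex field $v^i\widetilde{V}_i$ to its underlying real field: using Lemma \ref{lemma-7} one computes $(v^i\widetilde{V}_i)^\circ=v^i_\Re\widetilde{V^\circ_i}+v^i_\Im\,\widetilde{J(V_i^\circ)}$, and then identifies this with the canonical bi-invariant spray structure of Theorem A in the reference on left invariant spray structures; the same computation on the left invariant side gives $(u^i\widetilde{U}_i)^\circ=u^i_\Re\widetilde{U^\circ_i}+u^i_\Im\,\widetilde{J(U_i^\circ)}$, whence $u^i\widetilde{U}_i=v^i\widetilde{V}_i$, which is itself part of the statement and is nowhere addressed in your plan. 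In short, the decomposition into ``extension'' plus ``identification with the bi-invariant spray'' is the right skeleton and matches the paper's organization, but a skeleton is all you have: the frame computation of $N^i_k$, the cancellation yielding $v^i\widetilde{V}_i$, and the real-part identification are the proof, and none of them appears.
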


From Theorem \ref{main-thm-4},
we see another major difference between real and complex homogeneous Finsler geometry. In the context of Theorem \ref{main-thm-4}, we can not find any information of the metric or define a useful spray vector field on the Lie algebra, as in \cite{Hu2015}\cite{Hu2015-2}, from the complex spray.

Next, we consider the three different K\"{a}hler properties in complex Finsler geometry and prove

\begin{theorem}\label{main-thm-2}
For a left invariant complex Finsler metric $F$ on a complex Lie group $G$, the strongly K\"{a}hler,  K\"{a}hler, and weakly K\"{a}hler properties are all equivalent. More over, $F$ is K\"{a}hler if and only if $G$ has an Abelian Lie algebra.
\end{theorem}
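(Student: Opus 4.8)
The plan is to route every implication through the single substantial statement ``weakly K\"ahler $\Rightarrow\g$ is Abelian'' and then close a cycle. All three conditions are controlled by the horizontal torsion $T^i_{jk}=\Gamma^i_{jk}-\Gamma^i_{kj}$ of the Chern--Finsler connection: strongly K\"ahler means $T^i_{jk}\equiv0$, K\"ahler means $T^i_{jk}w^j=0$, and weakly K\"ahler means $H_{i\bar l}\bar w^l T^i_{jk}w^j=0$, where $H_{i\bar j}=\partial_{w^i}\partial_{\bar w^j}F^2$ is the fundamental tensor; the implications strongly $\Rightarrow$ K\"ahler $\Rightarrow$ weakly K\"ahler are automatic. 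By Theorem \ref{main-thm-1} the metric is complex Berwald, so $\Gamma^i_{jk}$, and hence $T^i_{jk}$, do not depend on the fibre direction $w$. Therefore the K\"ahler identity $T^i_{jk}w^j=0$, valid for all $w$, already forces $T^i_{jk}\equiv0$, so that ``K\"ahler'' and ``strongly K\"ahler'' coincide. It thus suffices to prove that weakly K\"ahler implies $\g$ Abelian and, conversely, that an Abelian $\g$ yields strongly K\"ahler.

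First I would compute $T^i_{jk}$ in the invariant frame. Left invariance means that in the left invariant fibre coordinates one has $F=\Phi(u)$, where $\Phi=F|_{\g}$ is the Minkowski norm on $\g=T^{1,0}_eG$; frame-wise the fundamental tensor is the pullback of the constant-base tensor $\widehat H_{i\bar j}(u)=\partial_{u^i}\partial_{\bar u^j}\Phi^2$. Feeding the bracket relations of Section 3 among $\widetilde U_i$ and $\partial_{u^i}$ (which encode the non-holonomy of the left invariant frame through the structure constants $c^k_{ij}$, $[e_i,e_j]=c^k_{ij}e_k$) into the definition of the Chern--Finsler connection, and discarding the direction-dependent terms by the Berwald property, I expect to find that the horizontal torsion is, in this frame, a nonzero universal multiple of the structure constant tensor, $T^i_{jk}\leftrightarrow c^i_{jk}$. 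Using the homogeneity identity $\widehat H_{i\bar l}\bar u^l=\partial_{u^i}\Phi^2$, the weakly K\"ahler equation then collapses to
\[
 (\partial_{u^i}\Phi^2)\,c^i_{jk}\,u^j=0\qquad(\forall\,k),
\]
i.e. $\Phi^2$ is annihilated by each holomorphic vector field $X_k=c^i_{jk}u^j\,\partial_{u^i}$ on $\g$, which is (up to sign) the fundamental field of the adjoint action generated by $e_k$, since $X_k(u)=[u,e_k]=-\mathrm{ad}_{e_k}(u)$. As the $X_k$ are complex linear and $\Phi^2$ is real, this is exactly the invariance of $\Phi$ under the adjoint action $\Ad(G)$.

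The decisive, genuinely complex, step is then the following. The unit ball $\{\Phi\le1\}\subset\g$ is a bounded, balanced, pseudoconvex body with $0$ in its interior, so the group of complex linear maps preserving it is bounded in operator norm and closed, hence a compact subgroup of $\mathrm{GL}(\g)=\mathrm{GL}_n(\mathbb C)$. By the previous paragraph $\Ad(G)$ lies in this compact group; thus $\overline{\Ad(G)}$ is compact and, after conjugation, $\mathrm{ad}(\g)=\mathrm{Lie}(\Ad(G))\subseteq\mathfrak u(n)$. But $\mathrm{ad}$ is complex linear ($\mathrm{ad}_{iX}=i\,\mathrm{ad}_X$), so $\mathrm{ad}(\g)$ is a complex subspace; since $\mathfrak u(n)\cap i\,\mathfrak u(n)=0$, this forces $\mathrm{ad}(\g)=0$, i.e. $\g$ is Abelian. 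This is precisely where the real theory breaks down: a compact real group admits bi-invariant metrics, whereas a nonzero complex subspace can never lie inside the anti-Hermitian Lie algebra of a compact group. The converse is immediate: if $\g$ is Abelian the left invariant frame is holonomic and $\Phi^2$ is a constant-coefficient Minkowski function in the corresponding coordinates, so all Chern--Finsler coefficients vanish and $F$ is strongly K\"ahler. Combining, weakly K\"ahler $\Rightarrow$ Abelian $\Rightarrow$ strongly K\"ahler $\Rightarrow$ K\"ahler $\Rightarrow$ weakly K\"ahler, which proves the equivalence of the three properties and the characterization that $F$ is K\"ahler if and only if $\g$ is Abelian.

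The hard part will be the frame computation identifying $T^i_{jk}$ with the structure constants: one must rewrite the Chern--Finsler coefficients from holomorphic coordinates into the left invariant frame, keeping track of the mixed brackets $[\widetilde U_i,\partial_{u^j}]$ and of the base derivatives of the transition between $w^i$ and $u^i$, and then use the Berwald property to cancel the direction-dependent contributions. Once $T^i_{jk}\leftrightarrow c^i_{jk}$ is in hand, the reduction to $\Ad$-invariance and the compactness argument are short and robust.
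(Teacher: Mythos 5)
Your proposal is correct in substance and shares the paper's skeleton --- compute the Chern--Finsler torsion in an invariant frame, identify it with the structure constants, reduce weakly K\"ahler to $\Ad(G)$-invariance of the Minkowski norm $\Phi=F|_{\g}$, and then show invariance forces $\g$ Abelian --- but it closes the argument differently at two points. For the decisive step, the paper passes from $\Ad(G)$-invariance of $F$ on $\g_{\mathbb R}$ to an $\Ad(G)$-invariant \emph{Euclidean} norm via the convex hull of the indicatrix and the Binet--Legendre transformation, concluding that $\g_{\mathbb R}$ is a compact Lie algebra and invoking the fact that a compact complex Lie algebra is Abelian; you instead observe directly that the linear isometry group of a Minkowski norm (bounded body, $0$ interior) is a compact subgroup of $\mathrm{GL}(\g)$, conjugate its Lie algebra into $\mathfrak{u}(n)$, and kill the complex subspace $\mathrm{ad}(\g)$ via $\mathfrak{u}(n)\cap\sqrt{-1}\,\mathfrak{u}(n)=0$. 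This is more self-contained (no Binet--Legendre machinery) and isolates the genuinely complex phenomenon cleanly. Second, for K\"ahler $\Leftrightarrow$ strongly K\"ahler the paper cites Chen--Shen, whereas you derive it from the Berwald property of Theorem \ref{main-thm-1} (torsion independent of $w$, so $T^i_{jk}w^j\equiv0$ linearizes to $T\equiv0$) --- valid, and in fact redundant in your cycle weakly $\Rightarrow$ Abelian $\Rightarrow$ strongly $\Rightarrow$ K\"ahler $\Rightarrow$ weakly. The one place you leave details open is the frame computation you flag as ``expected'': the paper confirms it, obtaining $(\Gamma^i_{j;k}-\Gamma^i_{k;j})\partial_{z^i}=D^p_jD^q_k c^r_{pq}V_r$ with $(D^i_j)$ invertible (it works in the right-invariant frame, where Lemma \ref{lemma-13} kills the $\widetilde{V}_a$-derivatives of the frame Hessian; your left-invariant formulation with $z$-independent $\Phi$ works equally well), so your torsion $\leftrightarrow c^i_{jk}$ identification and the collapse of the weakly K\"ahler condition to $(\partial_{u^i}\Phi^2)c^i_{jk}u^j=0$ at $e$ are exactly what the paper's (\ref{017})--(\ref{018}) establish; note also that your terse remark that reality of $\Phi^2$ upgrades vanishing of the $(1,0)$-derivative to genuine real invariance (both $X_k^\circ$ and $J(X_k^\circ)$ annihilate $\G$) is the same point the paper makes explicitly, and that, like the paper, you should assume $G$ connected so that the fundamental fields integrate to all of $\Ad(G)$.
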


From Theorem \ref{main-thm-2}, we see that the K\"{a}hler property here only depends on the algebraic property of $G$ and totally irrelevant to the metric itself.

Finally, we calculate the holomorphic sectional curvature and prove

\begin{theorem}\label{main-thm-3}
A left invariant complex Finsler metric on a complex Lie group has vanishing holomorphic sectional curvature everywhere.
\end{theorem}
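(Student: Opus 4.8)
The plan is to read off the holomorphic sectional curvature from the complex geodesics of $F$, exploiting the two preceding theorems. By Theorem \ref{main-thm-1} the metric is complex Berwald, so its Chern--Finsler nonlinear connection is induced by the \emph{holomorphic} complex spray $\chi$; by Theorem \ref{main-thm-4} this spray equals $u^i\widetilde{U}_i$ and, as a real object, is the canonical bi-invariant spray of $\G$. Consequently the complex integral curves of $\chi$, projected to $\G$, are exactly the left cosets of the holomorphic one-parameter subgroups, i.e. the maps $\varphi(\zeta)=g\exp(\zeta v)$ with $\zeta\in\mathbb{C}$, $g\in\G$ and $v\in\g$. These are precisely the complex geodesics of $(\G,F)$, and the holomorphic sectional curvature of $F$ at $g$ in the direction $w=(L_g)_{*}v$ equals the Gaussian curvature at $\zeta=0$ of the pulled-back metric $\varphi^{*}F$ on the disc.

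The heart of the matter is then a one-line invariance computation. Differentiating $\varphi(\zeta)=g\exp(\zeta v)$ gives $\varphi'(\zeta)=(L_{\varphi(\zeta)})_{*}v$, the value at $\varphi(\zeta)$ of the left invariant field $U_v$ with $U_v(e)=v$. Writing $\xi$ for the fibre coordinate on $T^{1,0}\Delta$, the pulled-back metric is
\[
(\varphi^{*}F)(\zeta,\xi)=F\big(\varphi(\zeta),\xi\,\varphi'(\zeta)\big)=|\xi|\,F\big(\varphi(\zeta),(L_{\varphi(\zeta)})_{*}v\big)=|\xi|\,F(e,v),
\]
where the last equality is the left invariance of $F$. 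Hence $(\varphi^{*}F)^2=|\xi|^2\,F^2(e,v)$ is a constant multiple of the Euclidean metric; its logarithmic conformal factor $\log\big(F^2(e,v)\big)$ is constant, so its Gaussian curvature $-\tfrac{2}{F^2(e,v)}\partial_\zeta\partial_{\bar\zeta}\log\big(F^2(e,v)\big)$ vanishes. Since every nonzero $w\in T^{1,0}_g\G$ is $(L_g)_{*}v$ for a unique $v\in\g$, this yields $K_F\equiv0$ everywhere.

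I expect the main obstacle to lie not in this computation but in justifying its first paragraph, namely that the intrinsic holomorphic sectional curvature, defined through the curvature of the Chern--Finsler connection, really does reduce to the Gaussian curvature of $\varphi^{*}F$ along the spray geodesic $\varphi$; this is exactly where the complex Berwald property is indispensable, since it makes $\chi$ holomorphic and $\varphi$ a genuine complex geodesic whose horizontal radial direction carries the curvature defining $K_F$. As an alternative that stays inside the paper's frame calculus, I would instead substitute the explicit holomorphic spray $\chi=u^i\widetilde{U}_i$ into the standard curvature formula written in the invariant frame $\{\widetilde{U}_i,\partial_{u^i}\}$. Because $F$ is complex Berwald, the nonlinear connection coefficients $N^i_j$ are holomorphic in both base and fibre variables, so $\delta_{\bar z^l}N^i_k=\partial_{\bar z^l}N^i_k-\overline{N^m_l}\,\partial_{\bar w^m}N^i_k=0$, and the horizontal $(1,1)$-curvature component that carries $K_F$ vanishes identically. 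The bookkeeping of this second route is the only laborious part, whereas the geometric route above makes the vanishing transparent.
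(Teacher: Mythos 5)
Your primary, ``geometric'' route has a genuine gap, and you have located it yourself: everything rides on the unproved claim that $K_F(g,w)$ \emph{equals} the Gaussian curvature at $\zeta=0$ of $\varphi^*F$ along your disc. What comes cheaply is only one inequality: by the curvature-decreasing/extremal characterization of holomorphic sectional curvature (Wu's lemma in the Hermitian case, extended to strongly pseudoconvex complex Finsler metrics by Wan \cite{Wa2019}), any holomorphic disc tangent to $w$ gives $K_F(g,w)\geq K_{\varphi^*\G}(0)$, so your flat disc yields only $K_F\geq 0$. For the reverse you must show the disc \emph{attains} the supremum, i.e.\ is extremal at the centre; your disc does satisfy the pointwise condition $\varphi''(0)^i+N^i_k(g,w)\varphi'(0)^k=0$ (its lift is an integral curve of $\chi$, hence horizontal), but the statement that this second-order tangency forces equality in the Finsler category is precisely the attainment part of \cite{Wa2019} (or the Abate--Patrizio theory of complex geodesic curves \cite{Ab1994}), and it is not supplied by the complex Berwald property, to which you attribute it. A further warning that your flatness computation is contentless by itself: since $\delta_{z^k}\G\equiv0$ always holds for the Chern--Finsler connection ($\G_iN^i_k=\partial_{z^k}\G$), the function $\G(\varphi,\varphi')$ is constant along \emph{any} disc with horizontal lift, for \emph{any} complex Finsler metric; so the pullback being flat carries no information about $F$, and all the content sits in the unproved identification. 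Your second route also rests on a false implication as stated: ``complex Berwald $\Rightarrow$ $N^i_j$ holomorphic in base and fibre'' fails in general --- Berwald gives $N^i_k=\Gamma^i_{j;k}(z)w^j$, holomorphic in $w$, but $\Gamma^i_{j;k}(z)$ may be an arbitrary smooth, non-holomorphic function of $z$.

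The good news is that the second route is repairable inside the paper's framework and then becomes a clean variant of the paper's own proof. Holomorphy of $N^i_k$ in $z$ is true here, not because of the Berwald property but because of the explicit invariant-frame formula (\ref{017}), $N^i_k=C^i_b\,v^a V_a(D^b_k)$, a product of holomorphic functions on $T^{1,0}G$ (equivalently, because $\chi$ extends holomorphically, Theorem \ref{main-thm-4}). Granting this, one has the short contraction identity, valid for any complex Finsler metric: using $\G_{i\bar q}N^i_k=\partial_{z^k}\G_{\bar q}$ and $\G_pN^p_k=\partial_{z^k}\G$,
\begin{equation*}
w^i\overline{w}^jw^k\overline{w}^l R_{i\overline{j}k\overline{l}}
=-w^k\overline{w}^l\partial_{z^k}\partial_{\overline{z}^l}\G
+w^k\overline{w}^l N^p_k\,\partial_{\overline{z}^l}\G_p
=-\G_p\,w^k\overline{w}^l\,\partial_{\overline{z}^l}N^p_k,
\end{equation*}
which vanishes identically once $N^p_k$ is holomorphic. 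This is shorter than, but in the same spirit as, the paper's actual proof, which never isolates the holomorphy of $N^i_k$: instead it expands $w^i\overline{w}^jw^k\overline{w}^lR_{i\overline{j}k\overline{l}}$ in the invariant frames via (\ref{016}), Lemma \ref{lemma-12} and Lemma \ref{lemma-13}, cancels the cross terms between (\ref{020}) and (\ref{023}), and kills the remainder using $\widetilde{V}_i\mathcal{G}_{j\overline{k}}=0$ and the homogeneity identities $u^c\partial_{u^t}\mathcal{G}_{c\overline{a}}=0$. If you rewrite your second route with the corrected justification and the displayed identity, you have a complete and arguably more transparent proof; as submitted, both of your routes have the gaps named above.
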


Theorem \ref{main-thm-3} justifies the folklore that many new examples of metrics with constant holomorphic sectional curvature can be found in Finsler geometry.

\begin{remark} C.P. Zhong and his coauthors, W. Xiao and K.K. Luo, have independently proved some main results in this paper. They first noticed the vanishing of the holomorphic sectional curvature for left invariant complex Finsler metrics on complex Lie groups. Their method is different from ours and their manuscript is now in preparation.
\end{remark}

To summarize, complex homogeneous Finsler geometry behave quite differently from its real analog. The special situation we have considered, i.e.,
a left invariant complex Finsler metric on a complex Lie group,
is a relatively simple case. However, we still believe that the technique of invariant frame is a useful tool, which may help us explore more general complex homogeneous Finsler manifolds.

This paper is scheduled as follows. In Section 2, we collect some necessary knowledge in complex geometry and complex Finsler geometry. In Section 3, we construct the complex invariant frames on $T^{1,0}G$ and prove its properties. In Section 4, we prove above main theorems. In Section 5, we provide an appendix for the lift of a smooth real tangent field.
\section{Preliminaries}

\subsection{Complex manifold and complex vector field}

Let $M$ be a {\it complex manifold} of complex dimension $n>0$. We use $z=(z^1,\cdots,z^n)\in M$ to denote a (holomorphic) local coordinate
on $M$.
For a complex function $f$, we present its real and imaginary part by $f_\Re$
and $f_\Im$ respectively, i.e., $f=f_\Re+\sqrt{-1}f_\Im$. So for
a local coordinate $(z^1,\cdots,z^n)\in M$,
$(z^1_\Re,z^1_\Im,\cdots,z^n_\Re,z^n_\Im)$
is the corresponding real local coordinate.

Let $J:T_zM\rightarrow T_zM$ be the complex structure at $z\in M$, where $T_zM$ is the real tangent space. Then $T_zM\otimes \mathbb{C}=T^{1,0}_zM\oplus T^{0,1}_zM$, in which the two summands are the eigenspaces for $\pm\sqrt{-1}$ respectively.
In local coordinate, $J\partial_{z^i_\Re}=\partial_{z^i_\Im}$ and $J\partial_{z^i_\Im}=-\partial_{z^i_\Re}$,
so we have
\begin{eqnarray}
T^{0,1}_zM &=& \mathrm{span}^\mathbb{C}
\{\partial_{z^i}=\tfrac12(\partial_{z^i_\Re}-\sqrt{-1}\partial_{z^i_\Im}),\forall i\}\quad\mbox{and}\nonumber\\
T^{1,0}_zM &=& \mathrm{span}^\mathbb{C}\{
\partial_{\overline{z}^i}=\tfrac12(\partial_{z^i_\Re}+
\sqrt{-1}\partial_{z^i_\Im}),\forall i\}.\label{004}
\end{eqnarray}

The {\it holomorphic tangent bundle} $T^{1,0}M=\bigcup_{z\in M} T^{1,0}_zM$ is a holomorphic bundle over $M$.
A section of $T^{1,0}M$ is {\it complex tangent field} $V$ {\it of type} $(1,0)$, which can be locally presented as
$V=f^i\partial_{z^i}$, in which $f^i$ are complex function in the local chart.
In this paper, we only consider smooth tangent field of type $(1,0)$, so to avoid iteration, we will not mention the smoothness or the type in later discussion.
The smooth isomorphism
 between real vector bundles, $\cdot^\circ: T^{1,0}M\rightarrow TM$,
$\partial_{z^i}{}^\circ=\partial_{z^i_\Re}$,
$(\sqrt{-1}\partial_{z^i})^{\circ}=\partial_{z^i_\Im}$,
provides a one-to-one correspondence between complex tangent vector fields
and real tangent vector fields, i.e., for $V=f^i\partial_{z^i}$, $V^\circ=V+\overline{V}=f^i_\Re\partial_{z^i_\Re}+f^i_\Im\partial_{z^i_\Im}$,
and conversely, $V=\tfrac12(V^\circ-\sqrt{-1}J(V^\circ))$.
In later discussion, $T^{1,0}M$ is occasionally viewed as $TM$ through $\cdot^\circ$, complex tangent fields are usually denoted as $U$, $V$, $W$, etc., and real tangent fields as $U^\circ$, $V^\circ$, $W^\circ$, etc.

For two complex
tangent fields $U=f^i\partial_{z^i}$ and $V=g^i\partial_{z^i}$,
their bracket is
$[U,V]=(f^i\partial_{z^i}g^j-g^i\partial_{z^i}f^j)\partial_{z^j}$. Generally speaking, $[U,V]^\circ$ does not coincide with
\begin{eqnarray}
[U^\circ,V^\circ]&=&(
f^i_\Re\partial_{z^i_\Re}g^j_\Re+f^i_\Im\partial_{z^i_\Im}g^j_\Re
-g^i_\Re\partial_{z^i_\Re}f^j_\Re-g^i_\Im\partial_{z^i_\Im}f^j_\Re
)\partial_{z^j_\Re}\nonumber\\
& &+
(
f^i_\Re\partial_{z^i_\Re}g^j_\Im+f^i_\Im\partial_{z^i_\Im}g^j_\Im
-g^i_\Re\partial_{z^i_\Re}f^j_\Im-g^i_\Im\partial_{z^i_\Im}f^j_\Im
)\partial_{z^j_\Re},\label{001}
\end{eqnarray}
because, generally speaking, $[U^\circ,V^\circ]\neq[U,V]^\circ$. On the other hand, when $U$ and $V$ are {\it holomorphic}, i.e. all $f^i$ and $g^i$ are holomorphic,
we have
\begin{lemma}\label{lemma-1}
For two holomorphic tangent fields $U$ and $V$,
$[U^\circ,V^\circ]=[U,V]^\circ$. Further more, we also have $[J(U^\circ),V^\circ]=[U^\circ,J(V^\circ)]=J([U,V]^\circ)$.
\end{lemma}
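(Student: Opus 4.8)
The plan is to avoid the unwieldy real-coordinate expansion (001) altogether and instead work in the complexification $TM\otimes\mathbb{C}=T^{1,0}M\oplus T^{0,1}M$, where the Lie bracket is the unique $\mathbb{C}$-bilinear extension of the real bracket. Under the identification $\cdot^\circ$ recalled above, a holomorphic field $U=f^i\partial_{z^i}$ satisfies $U^\circ=U+\overline{U}$, where $\overline{U}=\overline{f^i}\partial_{\overline{z}^i}$ is the conjugate $(0,1)$-field, and likewise $V^\circ=V+\overline{V}$. Since $[U^\circ,V^\circ]$ computed as a bracket of real fields agrees with the $\mathbb{C}$-bilinear bracket of $U+\overline{U}$ and $V+\overline{V}$, I would expand
\begin{equation*}
[U^\circ,V^\circ]=[U+\overline{U},\,V+\overline{V}]=[U,V]+[U,\overline{V}]+[\overline{U},V]+[\overline{U},\overline{V}].
\end{equation*}

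The key step, and the only place the holomorphy hypothesis is used, is the vanishing of the two mixed brackets. For $U=f^i\partial_{z^i}$ and $\overline{V}=\overline{g^j}\partial_{\overline{z}^j}$ the general bracket formula gives
\begin{equation*}
[U,\overline{V}]=f^i\bigl(\partial_{z^i}\overline{g^j}\bigr)\partial_{\overline{z}^j}-\overline{g^i}\bigl(\partial_{\overline{z}^i}f^j\bigr)\partial_{z^j},
\end{equation*}
and both coefficient terms vanish: since $g^j$ is holomorphic, $\overline{g^j}$ is anti-holomorphic and $\partial_{z^i}\overline{g^j}=\overline{\partial_{\overline{z}^i}g^j}=0$, while $\partial_{\overline{z}^i}f^j=0$ because $f^j$ is holomorphic. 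Hence $[U,\overline{V}]=0$, and symmetrically $[\overline{U},V]=0$. This is precisely the Cauchy--Riemann content that formula (001) conceals, and it is exactly what fails for a merely smooth (non-holomorphic) $(1,0)$-field.

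It then remains to assemble the surviving terms. The bracket $[U,V]=(f^i\partial_{z^i}g^j-g^i\partial_{z^i}f^j)\partial_{z^j}$ is again a holomorphic $(1,0)$-field, so $[U,V]^\circ=[U,V]+\overline{[U,V]}$ makes sense, and using $\partial_{\overline{z}^i}\overline{g^j}=\overline{\partial_{z^i}g^j}$ one checks $[\overline{U},\overline{V}]=\overline{[U,V]}$; therefore $[U^\circ,V^\circ]=[U,V]+\overline{[U,V]}=[U,V]^\circ$, which is the first assertion. For the second, I would record that $J(U^\circ)=(\sqrt{-1}\,U)^\circ=\sqrt{-1}(U-\overline{U})$ in the complexification, which follows from $J\partial_{z^i_\Re}=\partial_{z^i_\Im}$ and $J\partial_{z^i_\Im}=-\partial_{z^i_\Re}$. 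Expanding $[J(U^\circ),V^\circ]=[\sqrt{-1}(U-\overline{U}),\,V+\overline{V}]$ and $[U^\circ,J(V^\circ)]=[U+\overline{U},\,\sqrt{-1}(V-\overline{V})]$ by $\mathbb{C}$-bilinearity, the mixed brackets again drop out and both collapse to $\sqrt{-1}\bigl([U,V]-\overline{[U,V]}\bigr)$, which is exactly $J([U,V]^\circ)=(\sqrt{-1}\,[U,V])^\circ$.

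The main obstacle is conceptual rather than computational: one must be willing to replace the genuine real bracket $[U^\circ,V^\circ]$ by the complexified bracket of $U+\overline{U}$ and $V+\overline{V}$, and then verify carefully that the cross terms $[U,\overline{V}]$ and $[\overline{U},V]$ vanish. This vanishing is where holomorphy (as opposed to mere smoothness of the $(1,0)$-field) is indispensable, and it is the reason the identity $[U^\circ,V^\circ]=[U,V]^\circ$ holds here while failing in general.
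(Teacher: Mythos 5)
Your proof is correct, and it takes a genuinely different route from the paper's. The paper works entirely in real coordinates: it expands $[U^\circ,V^\circ]$ as in its formula (\ref{001}), applies the Cauchy--Riemann equations term by term to rewrite everything in terms of $\partial_{z^i_\Re}$-derivatives, separately expands $[U,V]^\circ$, and matches the two expressions (its (\ref{002}) against (\ref{003})); the $J$-statements then follow, as in yours, from $(\sqrt{-1}U)^\circ=J(U^\circ)$. You instead pass to the complexification $TM\otimes\mathbb{C}$, write $U^\circ=U+\overline{U}$, $V^\circ=V+\overline{V}$, extend the bracket $\mathbb{C}$-bilinearly, and isolate all use of holomorphy in the single clean fact $[U,\overline{V}]=[\overline{U},V]=0$, with $[\overline{U},\overline{V}]=\overline{[U,V]}$ handling the remaining conjugate term. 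Each step you take checks out: the real bracket does agree with the complexified bracket on real fields (both $U^\circ$ and $V^\circ$ are real sections of the complexification), the mixed-bracket computation correctly uses $\partial_{\overline{z}^i}f^j=0$ and $\partial_{z^i}\overline{g^j}=0$, and the identity $J(W^\circ)=(\sqrt{-1}W)^\circ$ that you invoke for $W=[U,V]$ is a pointwise fact about $\cdot^\circ$, so no extra holomorphy argument is needed there. What your approach buys is transparency and economy: the holomorphy hypothesis appears exactly once, in the vanishing of the cross terms, which also makes immediately visible why the lemma fails for merely smooth $(1,0)$-fields (the cross terms $[U,\overline{V}]+[\overline{U},V]$ then survive and mix types); moreover the three asserted identities all fall out of one bilinear expansion rather than three separate coordinate computations. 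What the paper's approach buys is self-containedness at the level of real coordinates, consistent with its stated goal of giving detailed elementary proofs of all preparation lemmas without presupposing comfort with the complexified bracket.
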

\begin{proof}
By the Cauchy-Riemann equation, i.e.,
\begin{eqnarray*}
\partial_{z^i_\Re}f^j_\Re=\partial_{z^i_\Im}f^j_\Im,\quad
\partial_{z^i_\Re}f^j_\Im=-\partial_{z^i_\Im}f^j_\Re,\quad
\partial_{z^i_\Re}g^j_\Re=\partial_{z^i_\Im}g^j_\Im,\quad
\partial_{z^i_\Re}g^j_\Im=-\partial_{z^i_\Im}g^j_\Re,\quad\forall i,j,
\end{eqnarray*}
 (\ref{001}) can be reformulated as
\begin{eqnarray}\label{002}
[U^\circ,V^\circ]&=&(
f^i_\Re\partial_{z^i_\Re}g^j_\Re-f^i_\Im\partial_{z^i_\Re}g^j_\Im
-g^i_\Re\partial_{z^i_\Re}f^j_\Re+g^i_\Im\partial_{z^i_\Re}f^j_\Im
)\partial_{z^j_\Re}\nonumber\\
& &+
(
f^i_\Re\partial_{z^i_\Re}g^j_\Im+f^i_\Im\partial_{z^i_\Re}g^j_\Re
-g^i_\Re\partial_{z^i_\Re}f^j_\Im-g^i_\Im\partial_{z^i_\Re}f^j_\Re
)\partial_{z^j_\Re}.
\end{eqnarray}
On the other hand, the Cauchy-Riemann equation implies
 implies
$$\partial_{z^i}f^j=\partial_{z^i_\Re}f^j=-\sqrt{-1}\partial_{z^i_\Im}f^j,\quad
\partial_{z^i}g^j=\partial_{z^i_\Re}g^j=-\sqrt{-1}\partial_{z^i_\Im}g^j,
\quad\forall i,j.$$
So we have
\begin{eqnarray}\label{003}
[U,V]^\circ&=&((f^i_\Re+\sqrt{-1}f^i_\Im)(\partial_{z^i_\Re}g^j_\Re+
\sqrt{-1}\partial_{z^i_\Re}g^j_\Im)-(g^i_\Re+\sqrt{-1}g^i_{\Im})(\partial_{z^i_\Re}f^j_\Re
+\sqrt{-1}\partial_{z^i_\Re}f^j_\Im)\partial_{z^j}\nonumber\\
&=& (f^i_\Re\partial_{z^i_\Re}g^j_\Re-f^i_\Im\partial_{z^i_\Re}g^j_\Im-
g^i_\Re\partial_{z^i_\Re}f^j_\Re+g^i_\Im\partial_{z^i_\Re}f^j_\Im)
\partial_{z^j_\Re}
\nonumber\\
& &+(f^i_\Re\partial_{z^i_\Re}g^j_\Im+f^i_\Im\partial_{z^i_\Re}g^j_\Re
-g^i_\Re\partial_{z^i_\Re}f^j_\Im-g^i_\Im\partial_{z^i_\Re}f^j_\Re)
\partial_{z^j_\Im}.
\end{eqnarray}
Comparing (\ref{002}) and (\ref{003}), the first statement of Lemma \ref{lemma-1} is proved.

Since $U$ is holomorphic, $\sqrt{-1}U$ is also holomorphic.
We have $(\sqrt{-1}U)^\circ=J(U^\circ)$, so
$$[J(U^\circ),V^\circ]=[(\sqrt{-1}U)^\circ,V^\circ]=
[\sqrt{-1}U,V]^\circ=(\sqrt{-1}[U,V])^\circ=J([U,V]^\circ).$$
The proof of $[U^\circ,J(V^\circ)]=J([U,V]^\circ)$ is similar.
\end{proof}

\subsection{Complete lifting of a holomorphic vector field}

For a complex manifold $M$, the holomorphic tangent bundle $T^{1,0}M$
is also a complex manifold. For the local coordinate $z=(z^1,\cdots,z^n)$ on $M$, we have the standard local coordinate
$(z,w)=(z^1,\cdots,z^n,w^1,\cdots,w^n)$ and the corresponding real local coordinate $(z^1_\Re,z^1_\Im,\cdots,z^n_\Re,z^n_\Im,w^1_\Re,w^1_\Im,\cdots,w^n_\Re,w^n_\Im)$
on $T^{1,0}M$, where $w=w^i\partial_{z^i}\in T^{1,0}_zM$ is also viewed as $w^\circ=w^i_\Re\partial_{z^i_\Re}+w^i_\Im\partial_{z^i_\Im}\in T_zM$.
The complex structure $J$ on $TM$ is given by
$$J\partial_{z^i_\Re}=\partial_{z^i_\Im},\quad
J\partial_{z^i_\Im}=-\partial_{z^i_\Re},\quad
J\partial_{w^i_\Re}=\partial_{w^i_\Im},\quad
J\partial_{w^i_\Im}=-\partial_{w^i_\Re},\quad\forall i,$$
so we have
\begin{eqnarray}
T^{1,0}_{(z,w)}(T^{1,0}M)=\mathrm{span}^\mathbb{C}\{
 \partial_{z^i}=\tfrac12(\partial_{z^i_\Re}-\sqrt{-1}\partial_{z^i_\Im}),
 \partial_{w^i}=\tfrac12(\partial_{w^i_\Re}-\sqrt{-1}\partial_{w^i_\Im}),
\forall i\}.
\label{005}
\end{eqnarray}

Now we introduce the lift of a complex tangent field $V$ from $M$ to $T^{1,0}M$. Around each $z\in M$ where $V$ is defined, $V^\circ$ is a real tangent field, which can be lifted to a real tangent field $\widetilde{V^\circ}$ on $TM$. To be precise, $V^\circ$
generates a family of local diffeomorphisms $\rho_t$. The tangent maps
$(\rho_t)_*$ are a family of local diffeomphisms on $TM$. The lift $\widetilde{V^\circ}$ from $M$ to $TM$ is given by $\tfrac{{\rm d}}{{\rm d}t}|_{t=0}(\rho_t)_*$. Using the canonical identification $\cdot^\circ:T^{1,0}M\rightarrow TM$, $\widetilde{V^\circ}$ can be viewed as a real tangent field on $T^{1,0}M$. Then it determines a complex tangent field
$\widetilde{V}$ on $T^{1,0}M$ by $(\widetilde{V})^\circ=\widetilde{V^\circ}$.
We call this $\widetilde{V}$ the {\it lift of $V$ from $M$ to $T^{1,0}M$}.

\begin{lemma}
\label{lemma-2}
For a holomorphic tangent field on $M$ locally presented as $V=f^i\partial_{z^i}$, its lift from $M$ to $T^{1,0}M$ is also a holomorphic tangent field, which can be locally presented as $\widetilde{V}=f^i\partial_{z^i}+w^i\partial_{z^i}f^j\partial_{w^j}$.
\end{lemma}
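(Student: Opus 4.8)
The statement has two parts: that $\widetilde V$ is holomorphic, and that it has the asserted coordinate expression. The plan is to recognize $\widetilde{V^\circ}$ as the classical complete (tangent) lift of the real field $V^\circ$ to $TM$, and then to exploit the holomorphicity of $V$ to see that this lift is of type $(1,0)$ with the claimed holomorphic coefficients. Concretely, I would first record the general complete-lift formula: if $V^\circ=a^\alpha\partial_{x^\alpha}$ in real coordinates $(x^\alpha)$ on $M$, with induced coordinates $(x^\alpha,y^\alpha)$ on $TM$, then $\widetilde{V^\circ}=a^\alpha\partial_{x^\alpha}+y^\beta(\partial_{x^\beta}a^\alpha)\partial_{y^\alpha}$. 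This follows directly from the definition $\widetilde{V^\circ}=\tfrac{\mathrm d}{\mathrm dt}\big|_{t=0}(\rho_t)_*$ by differentiating the coordinate expression of the tangent map $(\rho_t)_*$, and is exactly the content I would place in (or cite from) the appendix of Section 5.

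The conceptual core is that the flow $\rho_t$ of $V^\circ$ consists of biholomorphisms. Since $V=f^i\partial_{z^i}$ is holomorphic, I would check $\mathcal{L}_{V^\circ}J=0$: as $\mathcal{L}_{V^\circ}J$ is tensorial in its argument, it suffices to evaluate it on the real frame $\partial_{z^i_\Re}=(\partial_{z^i})^\circ$ and $\partial_{z^i_\Im}=J(\partial_{z^i})^\circ$, which arise from the holomorphic fields $\partial_{z^i}$; the second identity of Lemma \ref{lemma-1} then gives $[V^\circ,J(\partial_{z^i})^\circ]=J[V^\circ,(\partial_{z^i})^\circ]$, so $(\mathcal{L}_{V^\circ}J)(\partial_{z^i_\Re})=0$ and likewise on $\partial_{z^i_\Im}$. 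Hence $\rho_t$ preserves $J$ and is biholomorphic, so in holomorphic coordinates the integral curves of $V^\circ$ solve $\dot z^i=f^i(z)$ and $\rho_t(z)=z+tf(z)+O(t^2)$. The holomorphic tangent map then sends $w^i\partial_{z^i}$ to the vector with components $w^j+t\,w^i\partial_{z^i}f^j+O(t^2)$, so on $T^{1,0}M$ the flow $(\rho_t)_*$ is a holomorphic family of biholomorphisms whose generator, read off by differentiating at $t=0$, is $f^i\partial_{z^i}+w^i\partial_{z^i}f^j\partial_{w^j}$. Because this is the generator of a holomorphic flow it is a holomorphic $(1,0)$-field; and applying $\cdot^\circ$ to it reproduces $\tfrac{\mathrm d}{\mathrm dt}\big|_{t=0}(\rho_t)_*=\widetilde{V^\circ}$ on $TM$. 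This simultaneously establishes that $\widetilde V$ is holomorphic and that it equals the asserted expression.

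I would also be prepared to give the purely computational verification as a cross-check: specialize the complete-lift formula to $V^\circ=f^i_\Re\partial_{z^i_\Re}+f^i_\Im\partial_{z^i_\Im}$ in the real coordinates $(z^i_\Re,z^i_\Im,w^i_\Re,w^i_\Im)$, collect the four families of fiber coefficients, and simplify them with the Cauchy--Riemann equations and the identity $\partial_{z^i}f^j=\partial_{z^i_\Re}f^j$ used in Lemma \ref{lemma-1}; then compute $(\widetilde V)^\circ$ from the claimed formula via the definition of $\cdot^\circ$ and match. The base parts match immediately. The main obstacle is the fiber part: one must verify that the real and imaginary parts of the holomorphic coefficient $w^i\partial_{z^i}f^j$ coincide with the complete-lift fiber coefficients built from $w^i_\Re,w^i_\Im$, which is precisely the step where holomorphicity (through the Cauchy--Riemann relations) is indispensable and where, a priori, $\widetilde{V^\circ}$ need not be of type $(1,0)$. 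The conceptual argument above is what guarantees this matching cleanly, which is why I would lead with it.
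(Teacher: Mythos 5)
Your argument is correct, but it takes a genuinely different route from the paper. The paper's proof is precisely the computation you relegate to a cross-check: it invokes the real complete-lift formula of the appendix (Lemma \ref{lemma-9}) for $V^\circ=f^i_\Re\partial_{z^i_\Re}+f^i_\Im\partial_{z^i_\Im}$, simplifies the fiber coefficients with the Cauchy--Riemann equations to obtain (\ref{007}), separately expands $(f^i\partial_{z^i}+w^i\partial_{z^i}f^j\partial_{w^j})^\circ$ using $\partial_{z^i}f^j=\partial_{z^i_\Re}f^j$ to obtain (\ref{008}), and matches the two displays; the holomorphy of $\widetilde V$ is then simply read off from the resulting formula at the end. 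You instead lead with a conceptual argument: $\mathcal{L}_{V^\circ}J=0$ (deduced from the second identity of Lemma \ref{lemma-1} evaluated on the frame $\partial_{z^i_\Re},\partial_{z^i_\Im}$, using tensoriality), hence the flow $\rho_t$ consists of local biholomorphisms, hence the lifted flow can be written in the holomorphic coordinates $(z,w)$ as $(z,w)\mapsto\bigl(z+tf(z)+O(t^2),\ w^j+t\,w^i\partial_{z^i}f^j+O(t^2)\bigr)$, whose generator is visibly the holomorphic field $f^i\partial_{z^i}+w^i\partial_{z^i}f^j\partial_{w^j}$, so the type-$(1,0)$ property and the coordinate formula emerge simultaneously rather than a posteriori. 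What each approach buys: yours explains \emph{why} the lift of a holomorphic field is holomorphic (the flow is by biholomorphisms) and this viewpoint is reusable elsewhere in the paper (e.g., for the invariance arguments behind Lemma \ref{lemma-8}(5)); the paper's computation is more elementary and self-contained, requiring only Lemma \ref{lemma-9} plus Cauchy--Riemann and no flow-invariance theory. Two steps you use that the paper never needs, and that you should state explicitly if writing this up: (i) the standard equivalence between $\mathcal{L}_{V^\circ}J=0$ and $\rho_t^*J=J$; and (ii) the compatibility $(\varphi_*w)^\circ=\varphi_*(w^\circ)$ for a holomorphic map $\varphi$, which is exactly what identifies the generator you computed on $T^{1,0}M$ with $\widetilde{V^\circ}$ as defined in the paper via the real tangent maps on $TM$ and the isomorphism $\cdot^\circ$. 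Both are routine, so there is no gap, only unstated standard facts.
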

\begin{proof}
Since $V^\circ=f^i_\Re\partial_{z^i_\Re}+f^i_\Im\partial_{z^i_\Im}$,
Lemma \ref{lemma-9} in Appendix tells us
\begin{eqnarray}
& &\widetilde{{V}^\circ}=f^i_\Re\partial_{z^i_\Re}+f^i_\Im\partial_{z^i_\Im}
+w^i_\Re\partial_{z^i_\Re}f^j_\Re\partial_{w^j_\Re}
+w^i_\Re\partial_{z^i_\Re}f^j_\Im\partial_{w^j_\Im}
+w^i_\Im\partial_{z^i_\Im}f^j_\Re\partial_{w^j_\Re}
+w^i_\Im\partial_{z^i_\Im}f^j_\Im\partial_{w^j_\Im}\nonumber\\
&=&
f^i_\Re\partial_{z^i_\Re}+f^i_\Im\partial_{z^i_\Im}
+w^i_\Re\partial_{z^i_\Re}f^j_\Re\partial_{w^j_\Re}
+w^i_\Re\partial_{z^i_\Re}f^j_\Im\partial_{w^j_\Im}
-w^i_\Im\partial_{z^i_\Re}f^j_\Im\partial_{w^j_\Re}
+w^i_\Im\partial_{z^i_\Re}f^j_\Re\partial_{w^j_\Im},\label{007}
\end{eqnarray}
in which we have used the holomorphic property, i.e., $\partial_{z^i_\Re}f^j_\Re=\partial_{z^i_\Im}f^j_\Im$ and
$\partial_{z^i_\Re}f^j_\Im=\partial_{z^i_\Im}f^j_\Re$ for all $i$ and $j$, for the second equality.
On the other hand, for $f^i\partial_{z^i}+w^i\partial_{z^i}f^j\partial_{w^j}$ on $T^{1,0}M$, we have
\begin{eqnarray}
& &(f^i\partial_{z^i}+w^i\partial_{z^i}f^j\partial_{w^j})^\circ
=((f^i_\Re+\sqrt{-1}f^i_\Im)\partial_{z^i}
+(w^i_\Re+\sqrt{-1}w^i_\Im)(\partial_{z^i_\Re}f^j_\Re+
\sqrt{-1}\partial_{z^i_\Re}f^j_\Im)\partial_{w^j})^\circ\nonumber\\
&=&f^i_\Re\partial_{z^i_\Re}+f^i_\Im\partial_{z^i_\Im}
+w^i_\Re\partial_{z^i_\Re}f^j_\Re\partial_{w^j_\Re}
-w^i_\Im\partial_{z^i_\Re}f^j_\Im\partial_{w^j_\Re}
+w^i_\Re\partial_{z^i_\Re}f^j_\Im\partial_{w^j_\Im}
+w^i_\Im\partial_{z^i_\Re}f^j_\Re\partial_{w^j_\Im},\label{008}
\end{eqnarray}
in which we have used the holomorphic property, i.e., $\partial_{z^i}f^j=\partial_{z^i_\Re}f^j$ for all $i$ and $j$, for the first equality. Compare (\ref{007}) and (\ref{008}), we get
$\widetilde{V^\circ}=(f^i\partial_{z^i}+w^i\partial_{z^i}f^j\partial_{w^j})^\circ$
on $T^{1,0}M$, so $\widetilde{V}=f^i\partial_{z^i}+w^i\partial_{z^i}f^j\partial_{w^j}$, which  is obviously holomorphic.
\end{proof}

\begin{lemma}\label{lemma-7}
For a holomorphic tangent field $V$ on $M$, we have $J(\widetilde{V}^\circ)=\widetilde{J(V^\circ)}=(\widetilde{\sqrt{-1} V})^\circ$.
\end{lemma}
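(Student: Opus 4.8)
The plan is to reduce everything to the single constant-scalar identity $(\sqrt{-1}U)^\circ=J(U^\circ)$ recorded just before Lemma \ref{lemma-1}, applied on two different manifolds ($M$ and $T^{1,0}M$), together with the compatibility $(\widetilde{V})^\circ=\widetilde{V^\circ}$ that is built into the very definition of the complex lift. All three expressions in the statement are real tangent fields on $TM$, identified with $T^{1,0}M$ via $\cdot^\circ$, so it suffices to match them as real fields on $TM$.

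First I would observe that $\sqrt{-1}V$ is again holomorphic, so its lift $\widetilde{\sqrt{-1}V}$ is defined and, by Lemma \ref{lemma-2}, is computed from the local data $\sqrt{-1}f^i$. Since multiplication by the constant $\sqrt{-1}$ commutes with $\partial_{z^i}$, the formula $\widetilde{V}=f^i\partial_{z^i}+w^i\partial_{z^i}f^j\partial_{w^j}$ of Lemma \ref{lemma-2} immediately yields $\widetilde{\sqrt{-1}V}=\sqrt{-1}\widetilde{V}$. Applying the scalar identity $(\sqrt{-1}U)^\circ=J(U^\circ)$ on $T^{1,0}M$ to the complex field $U=\widetilde{V}$ then gives $(\widetilde{\sqrt{-1}V})^\circ=(\sqrt{-1}\widetilde{V})^\circ=J(\widetilde{V}^\circ)$, which is the equality of the first and third expressions.

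For the middle term I would use the same scalar identity, but now on $M$: $J(V^\circ)=(\sqrt{-1}V)^\circ$. Thus $\widetilde{J(V^\circ)}$, the complete lift of the real field $J(V^\circ)$, is literally the complete lift of the real field $(\sqrt{-1}V)^\circ$; and the latter is, by the definition of the lift of the complex field $\sqrt{-1}V$, equal to $(\widetilde{\sqrt{-1}V})^\circ$. Chaining the two paragraphs gives $J(\widetilde{V}^\circ)=\widetilde{J(V^\circ)}=(\widetilde{\sqrt{-1}V})^\circ$.

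The only real obstacle is bookkeeping: one must keep the three decorations straight, namely that the tilde denotes the complete lift of a \emph{real} field to $TM$, the circle denotes the $\mathbb{R}$-linear isomorphism $T^{1,0}M\to TM$, and the complex lift of $V$ is \emph{defined} so that these two operations commute. Once it is clear that $J$ applied pointwise corresponds under $\cdot^\circ$ to constant multiplication by $\sqrt{-1}$, and that this constant scaling passes through both Lemma \ref{lemma-2} and the real complete lift, there is no further content. As a sanity check one could instead expand all three fields in the local coordinates $(z,w)$ using equations (\ref{007}) and (\ref{008}) and verify that they agree component by component, but the conceptual argument above avoids that computation entirely.
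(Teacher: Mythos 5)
Your proposal is correct, but it reaches the conclusion by a genuinely different route than the paper. The paper's proof is a direct coordinate computation: it writes $J(V^\circ)=-f^i_\Im\partial_{z^i_\Re}+f^i_\Re\partial_{z^i_\Im}$, applies the real lift formula of Lemma \ref{lemma-9} to this field, invokes the Cauchy-Riemann equations to rewrite the result, and compares it term by term with the expression (\ref{008}) for $\widetilde{V}^\circ$ obtained inside the proof of Lemma \ref{lemma-2}; the remaining equality $(\widetilde{\sqrt{-1}V})^\circ=J(\widetilde{V}^\circ)$ is then declared ``similar''. You instead factor everything through three structural facts: the pointwise identity $(\sqrt{-1}U)^\circ=J(U^\circ)$, valid on any complex manifold and applied once on $M$ and once on $T^{1,0}M$; the scaling property $\widetilde{\sqrt{-1}V}=\sqrt{-1}\widetilde{V}$, read off from the formula in Lemma \ref{lemma-2} since the constant $\sqrt{-1}$ passes through both terms; and the defining compatibility $(\widetilde{W})^\circ=\widetilde{W^\circ}$. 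All three invocations are legitimate and non-circular (Lemma \ref{lemma-2} precedes the statement and its proof does not use it), and you are right that holomorphy enters only through Lemma \ref{lemma-2} -- correctly so, since for a non-holomorphic field the equality $J(\widetilde{V}^\circ)=\widetilde{J(V^\circ)}$ genuinely fails, while your middle equality $\widetilde{J(V^\circ)}=(\widetilde{\sqrt{-1}V})^\circ$ is definitional and holds for any smooth complex field. What your approach buys: no fresh computation, since the Cauchy-Riemann input is quarantined inside Lemma \ref{lemma-2}, and both halves of the chain are actually proved rather than one being left as ``similar''. What the paper's computation buys: explicit coordinate expressions of the kind it reuses repeatedly in Sections 3 and 4, at the cost of redoing Cauchy-Riemann manipulations already present in the proof of Lemma \ref{lemma-2}.
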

\begin{proof}
For the local presentation $V=f^i\partial_{z^i}$, $V^\circ=f^i_\Re\partial_{z^i_\Re}+f^i_\Im\partial_{z^i_\Im}$ and
$J(V^\circ)=-f^i_\Im\partial_{z^i_\Re}+f^i_\Re\partial_{z^i_\Im}$, then we have
\begin{eqnarray*}
& &\widetilde{J(V^\circ)}
=-f^i_\Im\partial_{z^i_\Re}+f^i_\Re\partial_{z^i_\Im}
-w^i_\Re\partial_{z^i_\Re}f^j_\Im\partial_{w^j_\Re}
-w^i_\Im\partial_{z^i_\Im}f^j_\Im\partial_{w^j_\Re}
+w^i_\Re\partial_{z^i_\Re}f^j_\Re\partial_{w^j_\Im}
+w^i_\Im\partial_{z^i_\Im}f^j_\Re\partial_{w^j_\Im}\\
&=&-f^i_\Im\partial_{z^i_\Re}+f^i_\Re\partial_{z^i_\Im}
-w^i_\Re\partial_{z^i_\Re}f^j_\Im\partial_{w^j_\Re}
-w^i_\Im\partial_{z^i_\Re}f^j_\Re\partial_{w^j_\Re}
+w^i_\Re\partial_{z^i_\Re}f^j_\Re\partial_{w^j_\Im}
-w^i_\Im\partial_{z^i_\Re}f^j_\Im\partial_{w^j_\Im}\\
&=&J(f^i_\Re\partial_{z^i_\Re}+f^i_\Im\partial_{z^i_\Im}
+w^i_\Re\partial_{z^i_\Re}f^j_\Im\partial_{w^j_\Im}
+w^i_\Im\partial_{z^i_\Re}f^j_\Re\partial_{w^j_\Im}
+w^i_\Re\partial_{z^i_\Re}f^j_\Re\partial_{w^j_\Re}
-w^i_\Im\partial_{z^i_\Re}f^j_\Im\partial_{w^j_\Re}),
\end{eqnarray*}
where we have applied the Cauchy-Riemann equation for the second equality.
Compare it with (\ref{008}), then we see $J(\widetilde{V}^\circ)=\widetilde{J(V^\circ)}$.
The proof of $(\widetilde{\sqrt{-1} V})^\circ=J(\widetilde{V}^\circ)$ is similar.
\end{proof}

Let $\{V_i,\forall i\}$ be a family of   holomorphic tangent fields on $M$. We call it a {\it holomorphic frame} if for each possible $z\in M$, $\{V_i(z),\forall i\}$ is a complex basis of $T^{1,0}M$. In $T^{1,0}_zM$, we have complex linear coordinate functions $v^i$ and the corresponding $\partial_{v^i}$, such that
each vector of $T^{1,0}_zM$ is presented as $v^i V_i(z)$. Then $\{\widetilde{V}_i,\partial_{v^i},\forall i\}$ is
a holomorphic frame on $T^{1,0}M$, for which we can generalize Lemma \ref{lemma-2} as follows.

\begin{lemma}\label{lemma-6}
For the holomorphic frame $\{\widetilde{V}_i,\partial_{v^i},\forall i\}$ and
the holomorphic tangent field $U=f^iV_i$, we have $\widetilde{U}=f^i\widetilde{V}_i+v^i V_i(f^j)\partial_{v^j}$.
\end{lemma}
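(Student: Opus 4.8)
The plan is to reduce everything to Lemma \ref{lemma-2} by computing in a local holomorphic coordinate chart $z=(z^1,\dots,z^n)$ on $M$ in which the frame is written $V_i=a_i^k\partial_{z^k}$ with holomorphic coefficients $a_i^k$. The essential point to keep in mind is that the lift operation $V\mapsto\widetilde{V}$ is \emph{not} $\mathcal{O}_M$-linear in $V$: writing $U=f^iV_i$ and naively pulling the $f^i$ through the lift is exactly what fails, and the extra term $v^iV_i(f^j)\partial_{v^j}$ is precisely the correction measuring this failure. So the goal is to verify that Lemma \ref{lemma-2}, applied to $U$ expressed in the coordinate frame, reproduces $f^i\widetilde{V}_i$ plus this correction.

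First I would record the two transition relations between the fibre coordinate functions $w^k$ associated with $\partial_{z^k}$ and $v^i$ associated with the frame $V_i$. Since a vector $w\in T^{1,0}_zM$ is written both as $w^k\partial_{z^k}$ and as $v^iV_i(z)=v^ia_i^k\partial_{z^k}$, we get $w^k=v^ia_i^k$. Viewing the change of chart on $T^{1,0}M$ as $z^k\mapsto z^k$, $w^l=a_i^l(z)v^i$ and differentiating, the vertical coordinate fields transform by $\partial_{v^j}=a_j^l\partial_{w^l}$ (the holomorphic horizontal part drops because $\partial z^k/\partial v^j=0$). These two identities, $w^k=v^ia_i^k$ and $\partial_{v^j}=a_j^l\partial_{w^l}$, are the only geometric inputs beyond Lemma \ref{lemma-2}.

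Next I would apply Lemma \ref{lemma-2} three times. Writing $U=g^k\partial_{z^k}$ with $g^l=f^ma_m^l$, the lemma gives $\widetilde{U}=g^k\partial_{z^k}+w^k(\partial_{z^k}g^l)\partial_{w^l}$, and the Leibniz rule splits $\partial_{z^k}g^l=(\partial_{z^k}f^m)a_m^l+f^m(\partial_{z^k}a_m^l)$ into a ``derivative of $f$'' part and a ``derivative of $a$'' part. Applying the lemma to each $V_i=a_i^k\partial_{z^k}$ gives $\widetilde{V}_i=a_i^k\partial_{z^k}+w^k(\partial_{z^k}a_i^l)\partial_{w^l}$, so $f^i\widetilde{V}_i$ reproduces exactly the ``derivative of $a$'' part. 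For the remaining term I would compute $v^iV_i(f^j)\partial_{v^j}=v^ia_i^k(\partial_{z^k}f^j)a_j^l\partial_{w^l}$ and use $v^ia_i^k=w^k$ to recognize it as $w^k(\partial_{z^k}f^j)a_j^l\partial_{w^l}$, which is exactly the ``derivative of $f$'' part. Matching the two displays after relabeling the dummy indices finishes the proof.

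The computation is routine, so the only real obstacle is bookkeeping: one must correctly convert the vertical field $\partial_{v^j}$ into $\partial_{w^l}$ via $a_j^l$ and correctly use $w^k=v^ia_i^k$ to absorb the frame coefficients, otherwise the two halves of the Leibniz expansion will not line up. Conceptually, one could instead invoke the complete/vertical-lift calculus, in which $\widetilde{\cdot}$ is additive and the product rule reads $(f^jV_j)^{\sim}=f^j\widetilde{V}_j+(f^j)^{C}V_j^{V}$, where the complete lift $(f^j)^{C}$ equals $v^iV_i(f^j)$ and the vertical lift of $V_j$ is $\partial_{v^j}$; but since the paper develops the lift only through Lemma \ref{lemma-2}, the coordinate verification above is the self-contained route.
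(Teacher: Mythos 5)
Your proof is correct and is essentially the paper's own argument: both express $U=f^iV_i$ in a local chart, apply Lemma \ref{lemma-2} to $U$ and to each $V_i$, split $\partial_{z^k}(f^m a_m^l)$ by the Leibniz rule, and identify the leftover ``derivative of $f$'' term with $v^iV_i(f^j)\partial_{v^j}$ via the transition relations $w^k=v^ia_i^k$ and $\partial_{v^j}=a_j^l\partial_{w^l}$. The only cosmetic difference is the direction of rewriting: the paper converts the coordinate-side term into frame variables using the inverse matrix $(B^i_j)=(A^i_j)^{-1}$, whereas you expand the frame-side term $v^iV_i(f^j)\partial_{v^j}$ into coordinates, which lets you avoid the inverse matrix altogether.
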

\begin{proof}
For the local coordinate $(z,w)=(z^1,\cdots,z^n,w^i,\cdots,w^n)$ on $T^{1,0}M$, we have $GL(n,\mathbb{C})$-valued holomorphic functions $A^i_j=A^i_j(z)$ and $(B^i_j)=(B^i_j(z))$ with $(B^i_j)=(A^i_j)^{-1}$ (i.e., $A^i_jB^j_k=B^i_jA^j_k=\delta^i_{k}$), satisfying
$V_j=A^i_j\partial_{z^i}$ and $\partial_{z^j}=B^i_j V_i$. More over,
we have
$w^i=A^i_jv^j$, $v^i=B^i_jw^j$, $\partial_{w^j}=B^i_j\partial_{v^i}$ and $\partial_{v^j}=A^i_j\partial_{w^i}$.
Then Lemma \ref{lemma-2} provides
$\widetilde{V}_j=A^i_j\partial_{z^i}+w^i\partial_{z^i}A^k_j\partial_{w^k}$, for $U=f^iV_i=f^iA_i^j\partial_{z^j}$, calculation shows
\begin{eqnarray*}
\widetilde{U}&=&f^iA_i^j\partial_{z^j}+
w^i\partial_{z^i}(f^jA^k_j)\partial_{w^k}
=f^iA_i^j\partial_{z^j}+w^i f^j\partial_{z^i}A^k_j\partial_{w^k}
+w^i A^k_j \partial_{z^i}f^j\partial_{w^k} \\
&=&f^i(A_i^j\partial_{z^j}+w^j\partial_{z^j}A^k_i\partial_{w^k})+
v^p A_p^i B_i^q V_q(f^j)\partial_{v^j}
=f^i\widetilde{V}_i+v^i V_i(f^j)\partial_{v^j}.
\end{eqnarray*}
The proof of Lemma \ref{lemma-6} is finished.
\end{proof}

We end this subsection with the following lemma.
\begin{lemma}\label{lemma-11}
For any two holomorphic tangent fields $U$ and $V$, $\widetilde{[U,V]}=[\widetilde{U},\widetilde{V}]$.
\end{lemma}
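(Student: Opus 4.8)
The plan is to prove the identity by a direct computation in the standard coordinates $(z,w)=(z^1,\cdots,z^n,w^1,\cdots,w^n)$ on $T^{1,0}M$, using the explicit lift formula of Lemma \ref{lemma-2}. Write $U=f^i\partial_{z^i}$ and $V=g^i\partial_{z^i}$ with $f^i,g^i$ holomorphic functions of $z$. Then Lemma \ref{lemma-2} gives
\begin{equation*}
\widetilde U=f^i\partial_{z^i}+w^k(\partial_{z^k}f^j)\partial_{w^j},\qquad
\widetilde V=g^i\partial_{z^i}+w^k(\partial_{z^k}g^j)\partial_{w^j},
\end{equation*}
and, setting $h^j=f^i\partial_{z^i}g^j-g^i\partial_{z^i}f^j$ so that $[U,V]=h^j\partial_{z^j}$, the same lemma yields $\widetilde{[U,V]}=h^j\partial_{z^j}+w^k(\partial_{z^k}h^j)\partial_{w^j}$. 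Since all the coefficient functions are holomorphic on $T^{1,0}M$, I would then compute $[\widetilde U,\widetilde V]$ as the bracket of two holomorphic tangent fields of type $(1,0)$ on the complex manifold $T^{1,0}M$, using the bracket formula recalled in Section 2 (now with indices ranging over both the $z$- and $w$-coordinates).

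First I would match the $\partial_{z^j}$-components. Because $f^j$ and $g^j$ depend only on $z$, the $\partial_{w}$-parts of $\widetilde U$ and $\widetilde V$ annihilate them, so the $z$-component of $[\widetilde U,\widetilde V]$ is $f^i\partial_{z^i}g^j-g^i\partial_{z^i}f^j=h^j$, which is exactly the $z$-component of $\widetilde{[U,V]}$.

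The substantive step is to match the $\partial_{w^j}$-components. Applying $\widetilde U$ to the $w$-coefficient $w^k\partial_{z^k}g^j$ of $\widetilde V$ (and symmetrically exchanging the roles of $U$ and $V$) and using the product rule, I would obtain four terms: two second-derivative terms $f^iw^k\partial_{z^i}\partial_{z^k}g^j-g^iw^k\partial_{z^i}\partial_{z^k}f^j$ and two first-derivative cross terms $w^k(\partial_{z^k}f^i)(\partial_{z^i}g^j)-w^k(\partial_{z^k}g^i)(\partial_{z^i}f^j)$, where the cross terms arise from $\partial_{w^m}w^k=\delta^k_m$. On the other side, expanding $w^k\partial_{z^k}h^j$ by the product rule produces precisely these same four terms after relabelling dummy indices, provided one uses the symmetry of the mixed second partials $\partial_{z^i}\partial_{z^k}=\partial_{z^k}\partial_{z^i}$. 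Comparing both components then gives $\widetilde{[U,V]}=[\widetilde U,\widetilde V]$.

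The only real obstacle is the bookkeeping in the $w$-component: one must keep the index labels straight across the two product-rule expansions and invoke equality of mixed partials at the right place. I note that a more conceptual alternative is available: by Lemma \ref{lemma-1} one has $[U,V]^\circ=[U^\circ,V^\circ]$, so $(\widetilde{[U,V]})^\circ=\widetilde{[U^\circ,V^\circ]}$; applying the classical bracket-preservation property of the real complete lift to $TM$ and then Lemma \ref{lemma-1} once more on $T^{1,0}M$ (legitimate since $\widetilde U$ and $\widetilde V$ are holomorphic by Lemma \ref{lemma-2}) yields $(\widetilde{[U,V]})^\circ=[\widetilde U,\widetilde V]^\circ$, whence the claim follows by injectivity of $\cdot^\circ$. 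I would present the direct computation as the primary argument, since it relies only on Lemma \ref{lemma-2}, already established above.
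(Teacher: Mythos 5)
Your proposal is correct, and your primary argument takes a genuinely different route from the paper. The paper never computes on $T^{1,0}M$ directly: it transfers everything to the real setting via $\cdot^\circ$, chaining Lemma \ref{lemma-1} ($[U,V]^\circ=[U^\circ,V^\circ]$ for holomorphic fields), Lemma \ref{lemma-2}, and the appendix Lemma \ref{lemma-10} (real complete lifts commute with brackets) to get $(\widetilde{[U,V]})^\circ=[\widetilde{U^\circ},\widetilde{V^\circ}]$, and then separately shows $[\widetilde{U},\widetilde{V}]^\circ=[\widetilde{U}^\circ,\widetilde{V}^\circ]$ by expanding $[\widetilde{U},\widetilde{V}]$ into real and imaginary parts and invoking Lemma \ref{lemma-7} to see $[J(\widetilde{U}^\circ),J(\widetilde{V}^\circ)]=-[\widetilde{U}^\circ,\widetilde{V}^\circ]$. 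Your direct computation instead works entirely in the holomorphic coordinates $(z,w)$ on $T^{1,0}M$, using only the lift formula of Lemma \ref{lemma-2}, the coordinate bracket formula (valid on $T^{1,0}M$ exactly as stated in Section 2 for $M$, with indices running over both $z$- and $w$-coordinates since the fields involved are of type $(1,0)$), and symmetry of mixed $\partial_z$-partials; this is in effect the holomorphic analog of the paper's own proof of Lemma \ref{lemma-10}, and it buys a shorter dependency chain, bypassing Lemmas \ref{lemma-1}, \ref{lemma-7} and \ref{lemma-10} altogether at the cost of some index bookkeeping. Your computation checks out: the $\partial_{z^j}$-components match because $f^j,g^j$ are independent of $w$, and in the $\partial_{w^j}$-component the two second-derivative terms and two cross terms from $\partial_{w^m}w^k=\delta^k_m$ reproduce the expansion of $w^k\partial_{z^k}h^j$. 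Your sketched ``conceptual alternative'' is essentially the paper's proof, and is in fact slightly cleaner in its second half: since $\widetilde{U},\widetilde{V}$ are holomorphic by Lemma \ref{lemma-2}, Lemma \ref{lemma-1} applied on the complex manifold $T^{1,0}M$ gives $[\widetilde{U},\widetilde{V}]^\circ=[\widetilde{U}^\circ,\widetilde{V}^\circ]$ directly, whereas the paper re-derives this by hand with Lemma \ref{lemma-7}.
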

\begin{proof}
Using Lemma \ref{lemma-1}, Lemma \ref{lemma-2} and Lemma \ref{lemma-10}, we can get
\begin{eqnarray}
(\widetilde{[U,V]})^\circ=\widetilde{[U,V]^\circ}=\widetilde{[U^\circ,V^\circ]}
=[\widetilde{U^\circ},\widetilde{V^\circ}].\label{013}
\end{eqnarray}
On the other hand,
by Lemma \ref{lemma-1} and Lemma \ref{lemma-7}, $[J(\widetilde{U}^\circ),J(\widetilde{V}^\circ)]
=-[\widetilde{U}^\circ,\widetilde{V}^\circ]$, so the real part of
$$[\widetilde{U},\widetilde{V}]=
[\frac12(\widetilde{U}^\circ-\sqrt{-1}J\widetilde{U}^\circ),
\frac12(\widetilde{V}^\circ-\sqrt{-1}J\widetilde{V}^\circ)]$$ coincides with
$
\frac14[\widetilde{U}^\circ,\widetilde{V}^\circ]-\frac14[J\widetilde{U}^\circ,
J\widetilde{U}^\circ]=
\frac12[\widetilde{U}^\circ,\widetilde{V}^\circ]$,
so we have $[\widetilde{U},\widetilde{V}]^\circ=[\widetilde{U}^\circ,\widetilde{V}^\circ]$.
Combine it with (\ref{013}), we finish the proof of Lemma \ref{lemma-11}.
\end{proof}
\subsection{Complex Finsler metric}

A {\it complex Finsler metric} on the complex manifold $M$
is a continuous function $F: T^{1,0}M\rightarrow[0,+\infty)$ satisfying
the following properties:
\begin{enumerate}
\item $F$ is regular, i.e., it is positive and smooth when restricted to $T^{1,0}M\backslash0$;
\item $F$ is 1-homogeneous, i.e., $F(z,\lambda w)=|\lambda|F(z,w)$ for any $(z,w)\in T^{1,0}M$
and $\lambda\in\mathbb{C}$.
\item $F$ is pseudo-convex, i.e., for $\G=F^2$,
the complex Hessian matrix $(\G_{i\overline{j}})=(\partial_{w^i}\partial_{\overline{w}^j}\G)$
is positive definite at any $(z,w)\in T^{1,0}M\backslash 0$.
\end{enumerate}
The restriction of the complex Finsler metric $F$ to
each $T^{1,0}_xM$ is called a {\it complex Minkowski norm}, which can be similarly defined for any finite dimensional complex linear space.

Denote by $(\G^{\overline{i}j})$ the inverse matrix of $(\G_{\overline{i}j})$, i.e., $\G_{k\overline{j}}\G^{\overline{j}i}=
\G^{\overline{i}j}\G_{j\overline{k}}=\delta^i_k$. We apply the convention
abbreviating $\partial_{w^i}\G$,
$\partial_{w^i}\partial_{\overline{w}^j}\partial_{\overline{w}^k}\G$,
$\partial_{\overline{w}^j}\partial_{z^k}\G$, etc., as
$\G_i$, $\G_{i\overline{j}\overline{k}}$, $\G_{\overline{j};k}$, etc., respectively.
The homogeneity property of $F$ implies $\G=\G_iw^i=\G_{\overline{i}}\overline{w}^i$,
$\G_{ij}w^i=\G_{i\overline{j}k}w^i=\G_{i\overline{j}\overline{k}}w^j=0$, etc.,
everywhere on $T^{1,0}M\backslash0$.

The coefficients for the Chern-Finsler connection are given by
$$N^i_k=\G^{\overline{j}i}\G_{\overline{j};k}=\G^{\overline{j}i}
\partial_{\overline{w}^j}\partial_{z^k}F^2,\quad
\Gamma^i_{j;k}=\partial_{{w}^j}N^i_k\quad\mbox{and}\quad
C^i_{jk}=\G^{\overline{l}i}\G_{j {k}\overline{l}}=
\G^{\overline{l}i}\partial_{w^k}\G_{j\overline{l}}.$$
We call $F$ {\it complex Berwald} if all $\Gamma^i_{j;k}$ are independent of the $w$-coordinate. We call $F$ {\it strongly K\"{a}hler} ({\it K\"{a}hler} or {\it weakly K\"{a}hler})
if
$$\Gamma^i_{j;k}-\Gamma^i_{k;j}=0 \ \mbox{(}w^k(\Gamma^i_{j;k}-\Gamma^i_{k;j})=0\mbox{ or }w^k(\Gamma^i_{j;k} -\G_i\Gamma^i_{k;j})\G_i=0\mbox{ respectively})$$
is satisfied everywhere on $T^{1,0}M\backslash0$.

Denote by $\delta_{z^i}=\partial_{z^i}-N_i^j\partial_{w^j}$ the complex tangent fields on $T^{1,0}M$, which linearly span the horizonal distribution. Then $\chi=w^i\delta_{z^i}$ is a complex tangent field, which can be globally defined on $T^{1,0}M\backslash0$. It is the analog for the geodesic spray in real Finsler geometry, so we call it the {\it complex spray}
for $(M,F)$.

The {\it holomorphic sectional curvature} can be presented as the function
$$K=2R_{i\overline{j}k\overline{l}}\frac{w^i\overline{w}^j
w^k\overline{w}^l}{\G^2}$$
on $T^{1,0}M\backslash0$, in which
$R_{i\overline{j}k\overline{l}}=-\G_{i\overline{j};k\overline{l}}+
\G^{\overline{q}p}\G_{i\overline{q};k}\G_{p\overline{j};\overline{l}}
=-\partial_{z^k}\partial_{\overline{z}^l}\G_{i\overline{j}}+
\G^{\overline{q}p}\partial_{z^k}\G_{i\overline{q}}\partial_{\overline{z}^l}
\G_{p\overline{j}}$.

More details can be found in \cite{Ab1994}.
\section{Left invariant complex Finsler metrics and invariant frames}

\subsection{Left invariant complex Finsler metric on a complex Lie group}
Let $G$ be a
{\it complex Lie group}. By definition it means that, $G$ has a group structure with the identity element $e\in G$ specified, such that the multiplication map $(g_1,g_2)\in G\times G\mapsto g_1g_2\in G$ and the inverse map
$g\in G\mapsto g^{-1}\in G$ are holomorphic.
For any $g\in G$, Denote by $L_g(g')=gg'$ and $R_g(g')=g'g$ with $g'\in G$ the left and right translations  respectively. The holomorphic property of the multiplication map implies that all left and right translations
are holomorphic diffeomorphisms on $G$.

A complex tangent field $V$ on $G$ is called {\it left invariant} ({\it right invariant}), if $V^\circ$ is left invariant (right invariant), i.e., $V^\circ$ is preserved by the tangent maps of all left translations (all right translations respectively).

\begin{lemma}\label{lemma-3}
Let $V^\circ$ be a left or right invariant real tangent field on $G$, then $V$ is holomorphic.
\end{lemma}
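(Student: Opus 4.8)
The plan is to reduce holomorphy of the $(1,0)$-field $V$ to an infinitesimal-automorphism property of $V^\circ$, and then to recognize the flow of an invariant field as a one-parameter family of translations, which are holomorphic because the multiplication on $G$ is holomorphic. First I would record the local criterion: writing $V=f^i\partial_{z^i}$, so that $V^\circ=f^i_\Re\partial_{z^i_\Re}+f^i_\Im\partial_{z^i_\Im}$, a short computation through the $\cdot^\circ$ dictionary (entirely parallel to the Cauchy--Riemann manipulations in Lemma \ref{lemma-1}) shows that the Lie derivative $\mathcal{L}_{V^\circ}J$ applied to $\partial_{z^k}$ and $\partial_{\overline{z}^k}$ produces exactly the quantities $\partial_{\overline{z}^k}f^i$ and their conjugates. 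Hence $\mathcal{L}_{V^\circ}J=0$ if and only if every $f^i$ is holomorphic, i.e.
\[
V \text{ is holomorphic} \iff \mathcal{L}_{V^\circ}J=0 .
\]

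Next I would pass from the Lie-derivative condition to the flow. If $\rho_t$ denotes the local flow generated by the real field $V^\circ$, then $\tfrac{{\rm d}}{{\rm d}t}\big|_{t=0}\rho_t^*J=\mathcal{L}_{V^\circ}J$, and by the one-parameter-group property of the flow the tensor $\rho_t^*J$ is constant in $t$ precisely when this derivative vanishes. Therefore $\mathcal{L}_{V^\circ}J=0$ is equivalent to $(\rho_t)_*\circ J=J\circ(\rho_t)_*$ for all small $t$, that is, to each $\rho_t$ being a local biholomorphism. So it suffices to prove that the flow of an invariant field consists of holomorphic maps.

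Finally I would identify the flow explicitly. Suppose $V^\circ$ is left invariant, so $(L_g)_*V^\circ=V^\circ$ for every $g$; then each $L_g$ carries integral curves of $V^\circ$ to integral curves, giving $L_g\circ\rho_t=\rho_t\circ L_g$. Evaluating at $e$ and setting $\gamma(t)=\rho_t(e)$ yields $\rho_t(g)=L_g(\rho_t(e))=g\,\gamma(t)=R_{\gamma(t)}(g)$, so $\rho_t=R_{\gamma(t)}$ is right translation. Since the multiplication map of $G$ is holomorphic, every right translation is a holomorphic diffeomorphism, hence $(\rho_t)_*$ commutes with $J$; by the previous two steps $\mathcal{L}_{V^\circ}J=0$ and $V$ is holomorphic. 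For a right invariant $V^\circ$ the identical argument gives $\rho_t=L_{\gamma(t)}$, again holomorphic, and the same conclusion follows.

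The main obstacle is really the bookkeeping in the first step: establishing $V\text{ holomorphic}\iff\mathcal{L}_{V^\circ}J=0$ cleanly through the real/complex correspondence, together with the careful (but standard) identification of the flow of an invariant field with translations. If one prefers to avoid the $\mathcal{L}_{V^\circ}J$ machinery, the same result can be obtained directly: set $U(g)=(L_g)_*V(e)$; because $L_g$ is holomorphic it maps $T^{1,0}_eG$ into $T^{1,0}_gG$, and holomorphy of the multiplication makes $g\mapsto(L_g)_*V(e)$ a holomorphic section of $T^{1,0}G$, while $(L_g)_*\circ J=J\circ(L_g)_*$ combined with left invariance forces $V(g)=(L_g)_*V(e)=U(g)$, so $V=U$ is holomorphic. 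Either way, the decisive input is simply that the group operations of $G$ are holomorphic.
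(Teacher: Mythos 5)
Your proof is correct, and its decisive input is the same as the paper's: the flow of a left invariant real field consists of right translations (and of left translations in the right invariant case), which are holomorphic diffeomorphisms because the group multiplication is holomorphic. Where you differ is the bridge from ``the flow maps are holomorphic'' to ``$V$ is holomorphic.'' The paper does this bare-handed in coordinates: writing the flow as $R_{g(t)}g'=(f^1(z,t),\cdots,f^n(z,t))$ with each $f^i(\cdot,t)$ holomorphic, it applies $\partial_t|_{t=0}$ to the Cauchy--Riemann equations of the $f^i$ and reads off the Cauchy--Riemann equations for the components $\partial_t|_{t=0}f^i$ of $V$. You instead route through the criterion ($V$ holomorphic $\iff \mathcal{L}_{V^\circ}J=0$) combined with $\tfrac{{\rm d}}{{\rm d}t}\rho_t^*J=\rho_t^*(\mathcal{L}_{V^\circ}J)$; this is valid, and only the implication $\mathcal{L}_{V^\circ}J=0\Rightarrow V$ holomorphic is actually used, but note that verifying that implication is exactly the same Cauchy--Riemann bookkeeping the paper performs explicitly---so the ``short computation'' you defer in your first step is where the paper's entire proof lives, and your version additionally carries an unused converse and the flow-derivative formalism. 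On the other hand, you justify via $L_g\circ\rho_t=\rho_t\circ L_g$ that the flow really is $R_{\gamma(t)}$ with $\gamma(t)=\rho_t(e)$, a point the paper simply asserts. Your closing alternative is the genuinely different and arguably cleanest route: since $(L_g)_*$ commutes with $J$, left invariance gives $V(g)=(L_g)_*V(e)$ directly, and holomorphy of the multiplication $m:G\times G\rightarrow G$ (differentiated in the second factor at $e$) makes $g\mapsto (L_g)_*V(e)$ a holomorphic section of $T^{1,0}G$, with $(R_g)_*$ playing the same role in the right invariant case; this dispenses with flows and Lie derivatives altogether, whereas the paper's computation has the virtue of using nothing beyond the chain rule and the Cauchy--Riemann setup already in place from Lemma \ref{lemma-1}.
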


\begin{proof}We only need to verify the lemma for a left invariant $V^\circ$,
because the proof in other case is similar. Let $R_{g(t)}$ be the one-parameter subgroup of right translations generated by $V^\circ$. Using a local chart,
we have a smooth map
\begin{eqnarray*}
& &(g',t)=(z^1,\cdots,z^n,t)=(z^1_\Re,z^1_\Im,\cdots,z^n_\Re,z^n_\Im,t)\in G\times\mathbb{R}\\
&\mapsto& R_{g(t)}g'=(f^1(z,t),\cdots,f^n(z,t))=
(f^1_\Re,f^1_\Im,\cdots,f^n_\Re,f^n_\Im),
\end{eqnarray*}
in which all $f^i$ are holomorphic with respect to $z$.
Then
$V^\circ=(\partial_{t}|_{t=0}f^i_\Re)\partial_{z^i_\Re}+
(\partial_{t}|_{t=0}f^i_\Im)\partial_{z^i_\Im}$ and $V=(\partial_t|_{t=0} f^i)\partial_{z^i}$.
Apply $\partial_t|_{t=0}$ to the Cauchy-Riemann equations $$\partial_{z^i_\Re}f^j_\Re=\partial_{z^i_\Im}f^j_\Im,\quad \partial_{z^i_\Im}f^j_\Re=-\partial_{z^i_\Re}f^j_\Im,\quad\forall i,j,$$
we get
$$\partial_{z^i_\Re}(\partial_t|_{t=0}f^j_\Re)=\partial_{z^i_\Im}(\partial_t|_{t=0}f^j_\Im),
\quad \partial_{z^i_\Im}(\partial_t|_{t=0}f^j_\Re)=
-\partial_{z^i_\Re}(\partial_t|_{t=0}f^j_\Im),\quad\forall i,j.$$
So $V$ is holomorphic with respect to $z$, which ends the proof of Lemma \ref{lemma-3}.
\end{proof}

\begin{lemma}\label{lemma-4}
Let $J:TG\rightarrow TG$ be the complex structure on $G$. Then for any left (right) invariant complex tangent field $V$ on $G$, $\sqrt{-1}V$ is also a
left (right) invariant complex tangent field, i.e.,
$J(V^\circ)=(\sqrt{-1}V)^\circ$ is a left (right respectively) invariant real tangent field.
\end{lemma}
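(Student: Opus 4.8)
The plan is to exploit the single structural fact, already recorded in the paper, that every left translation $L_g$ is a holomorphic diffeomorphism of $G$. The decisive consequence I would use is that the real tangent map of a holomorphic map is complex linear, i.e. it commutes with the complex structure, $(L_g)_*\circ J = J\circ (L_g)_*$ at every point. This is the coordinate-free restatement of the Cauchy--Riemann equations, and it is the one place where the holomorphicity of the group multiplication is genuinely needed.

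Granting this, the argument is short. By definition, $V$ being left invariant means that the real field $V^\circ$ is preserved by all the tangent maps $(L_g)_*$, that is $(L_g)_*(V^\circ(g'))=V^\circ(gg')$ for all $g,g'\in G$. Applying $(L_g)_*$ to $J(V^\circ)$ and commuting $J$ past the tangent map gives
\[
(L_g)_*\bigl(J(V^\circ)(g')\bigr)=J\bigl((L_g)_*V^\circ(g')\bigr)=J\bigl(V^\circ(gg')\bigr),
\]
so $J(V^\circ)$ is again preserved by every $(L_g)_*$, i.e. it is a left invariant real tangent field. Recalling the correspondence $\cdot^\circ$ of Section 2, under which $(\sqrt{-1}\partial_{z^i})^\circ=\partial_{z^i_\Im}=J(\partial_{z^i_\Re})=J(\partial_{z^i}{}^\circ)$ and hence $(\sqrt{-1}V)^\circ=J(V^\circ)$ for every $V$, this says exactly that $\sqrt{-1}V$ is left invariant. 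The right invariant case is identical, using that each $R_g$ is also a holomorphic diffeomorphism.

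The only step requiring care is the commutation $(L_g)_*\circ J=J\circ(L_g)_*$. I would either invoke it as the standard characterization of a holomorphic map between complex manifolds, or verify it in a holomorphic chart: writing $L_g$ in local coordinates, its components are holomorphic, so its real Jacobian has the block structure dictated by the Cauchy--Riemann equations, which is precisely the condition that it intertwines the two copies of $J$ given locally by $J\partial_{z^i_\Re}=\partial_{z^i_\Im}$ and $J\partial_{z^i_\Im}=-\partial_{z^i_\Re}$. I expect no genuine obstacle here; notably, the subtlety flagged in the introduction, that $\cdot^\circ$ fails to respect brackets in general, never intervenes, since this lemma concerns only the pointwise action of $J$ together with the tangent map of a single diffeomorphism, and not the Lie bracket.
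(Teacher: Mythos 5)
Your proposal is correct and follows essentially the same route as the paper's own proof: both arguments rest on the single fact that the tangent map $(L_g)_*$ of the holomorphic diffeomorphism $L_g$ commutes with $J$, apply this to the identity $(L_g)_*(V^\circ)=V^\circ$ to conclude $(L_g)_*(J(V^\circ))=J(V^\circ)$, and treat the right invariant case symmetrically. Your additional verification of $(\sqrt{-1}V)^\circ=J(V^\circ)$ and the chart-level check of the Cauchy--Riemann block structure are just details the paper leaves implicit, not a different method.
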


\begin{proof}
We only need to verify the lemma when $V$ is left invariant, because the proof for the other half is similar. The left invariance of $V^\circ$ means
$(L_g)_* (V^\circ)=V^\circ$ for any $g\in G$.
Since $L_g:G\rightarrow G$
is holomorphic, its tangent map $(L_g)_*:TG\rightarrow TG$ commutes with $J$. So we have $(L_g)_*(J(V^\circ))=J((L_g)_*(V^\circ))=J(V^\circ)$ for any $g\in G$, which implies $J(V^\circ)$ is left invariant. The proof of Lemma \ref{lemma-4} is finished.
\end{proof}

Let $F:T^{1,0}M\rightarrow[0,\infty)$ be a complex Finsler metric on $G$. We call $F$ {\it left invariant}, if it is preserved by the tangent maps of all left translations $(L_g)_*:T^{1,0}G=TG\rightarrow TG=T^{1,0}G$. Here we identify $T^{1,0}$ with $TG$ through $\cdot^\circ$.

\begin{lemma}\label{lemma-5}
For any right invariant complex tangent field $V$ on $G$, we have $\widetilde{V}\G\equiv0$.
\end{lemma}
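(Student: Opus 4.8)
The plan is to exploit the standard fact that a right invariant vector field on a Lie group generates a flow by left translations, and to combine this with the left invariance of $F$. The key structural observation is a mirror image of what appears in the proof of Lemma \ref{lemma-3}: there a \emph{left} invariant field was seen to generate \emph{right} translations, so here the real right invariant field $V^\circ$ generates a flow $\rho_t=L_{g(t)}$ by \emph{left} translations, where $g(t)$ is the one-parameter subgroup tangent to $V^\circ(e)$. By the definition of the lift in Section 2.2, the lift $\widetilde{V^\circ}=(\widetilde{V})^\circ$ is the infinitesimal generator of $(\rho_t)_*$, so the flow of $\widetilde{V^\circ}$ on $T^{1,0}G$ (identified with $TG$ via $\cdot^\circ$) is exactly $(L_{g(t)})_*$.

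First I would feed the left invariance of $F$ into this picture. Left invariance means $F\circ (L_g)_*=F$ for every $g\in G$; in particular $F\circ (L_{g(t)})_*=F$ for all $t$. Since each $(L_{g(t)})_*$ is a fiberwise linear isomorphism it preserves $T^{1,0}G\backslash 0$, where $F$ is smooth, so differentiating this identity at $t=0$ along the flow yields $\widetilde{V^\circ}F=0$ there. Because $\G=F^2$ and $(\widetilde{V})^\circ=\widetilde{V^\circ}$, the chain rule then gives $(\widetilde{V})^\circ\G=2F\cdot\widetilde{V^\circ}F=0$ on $T^{1,0}G\backslash 0$.

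To upgrade this real identity to the full complex statement I would run the same argument once more on $\sqrt{-1}V$. By Lemma \ref{lemma-4} the field $J(V^\circ)=(\sqrt{-1}V)^\circ$ is again right invariant, so its lift likewise annihilates $\G$; and by Lemma \ref{lemma-7} this lift is $\widetilde{J(V^\circ)}=J((\widetilde{V})^\circ)$, whence $J((\widetilde{V})^\circ)\G=0$. Finally, using the decomposition $\widetilde{V}=\tfrac12\bigl((\widetilde{V})^\circ-\sqrt{-1}J(\widetilde{V})^\circ\bigr)$ from Section 2.1 and applying it to the real-valued function $\G$, both the real part $\tfrac12(\widetilde{V})^\circ\G$ and the imaginary part $-\tfrac12 J((\widetilde{V})^\circ)\G$ vanish, so $\widetilde{V}\G\equiv 0$.

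I expect the main obstacle to be the careful justification of the first step: identifying the flow of the lifted field $\widetilde{V^\circ}$ on $TG$ with the family of tangent maps $(L_{g(t)})_*$, and checking that left invariance of $F$—a statement about the fiberwise linear maps $(L_g)_*$—really does force $F$ to be constant along this flow. Once that identification is secured, the rest is a routine assembly of Lemmas \ref{lemma-4} and \ref{lemma-7} with the splitting of a $(1,0)$ field into its real and imaginary parts.
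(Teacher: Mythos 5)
Your proposal is correct and follows essentially the same route as the paper's proof: establish $\widetilde{V^\circ}\G\equiv0$ from the fact that the right invariant field $V^\circ$ generates left translations (which preserve $F$), then use Lemma \ref{lemma-4} and Lemma \ref{lemma-7} to get $(J(\widetilde{V}^\circ))\G=\widetilde{J(V^\circ)}\G\equiv0$, and conclude via $\widetilde{V}\G=\tfrac12\bigl(\widetilde{V}^\circ\G-\sqrt{-1}(J(\widetilde{V}^\circ))\G\bigr)$. The only difference is that you spell out the flow-differentiation argument that the paper compresses into one sentence, which is a sound elaboration rather than a deviation.
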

\begin{proof} Since the right invariant real tangent field $V^\circ$ generates
left translations on $G$, the left invariance of $G$ implies $\widetilde{V^\circ} \G\equiv0$. By Lemma \ref{lemma-4}, $J(V^\circ)$ is also right invariant. So Lemma \ref{lemma-7} implies  $(J(\widetilde{V}^\circ))\G = \widetilde{J(V^\circ)}\G\equiv0$. To summarize, $\widetilde{V}\G=
\tfrac12(\widetilde{V}^\circ\G-\sqrt{-1}(J(\widetilde{V}^\circ))\G)
\equiv0$, which ends the proof.
\end{proof}
\subsection{Technique of invariant frames}
The Lie algebra $\mathfrak{g}$ of a complex Lie group $G$
is a complex Lie algebra, which can be identified with the space of all left invariant complex tangent fields, with the canonical bracket between complex tangent fields. It can also be identified with $T^{1,0}_eG$.

We choose a complex basis $\{e_1,\cdots,e_n\}$ of
$\mathfrak{g}=T^{1,0}G$, with the corresponding bracket coefficients $c^k_{ij}$ determined by $[e_i,e_j]=c_{ij}^ke_k$. Then we have left invariant complex tangent fields $\{U_i,\forall i\}$ and right invariant complex tangent fields $\{V_i,\forall i\}$, satisfying $U_i(e)=V_i(e)=e_i$ for each $i$. Denote $\widetilde{U}_i$ and $\widetilde{V}_i$ their lift to $T^{1,0}G$ respectively.
Any $w\in T^{1,0}_gG$ can be uniquely presented as $w=u^iU_i(g)=v^iV_i(g)$, so we have the complex linear coordinates $(u^1,\cdots,u^n)$ and $(v^1,\cdots,v^n)$, and the corresponding  $\{\partial_{u^i},\forall i\}$, $\{\partial_{v^i},\forall i\}$ in $T^{1,0}_gG$, which can be globally defined
as holomorphic functions and tangent fields on $T^{1,0}G$. To summarize, we get the {\it left invariant
holomorphic frame} $\{\widetilde{U}_i,\partial_{u^i},\forall i\}$ and the
{\it right invariant holomorphic frame}
$\{\widetilde{V}_i,\partial_{v^i},\forall i\}$.

The interrelation between these two frames are as follows.
For each $g\in G$, we define $\phi^i_j=\phi^i_j(g)$ and $\psi^i_j=\psi^i_j(g)$, such that
$\mathrm{Ad}(g)e_j=\phi^i_j e_i$ and $\mathrm{Ad}(g^{-1})e_i=\psi_j^i e_i$
(so we also have $(\psi^i_j)=(\phi^i_j)^{-1}$, i.e., $\psi^i_j\phi^j_k=\phi^i_j\psi^j_k=\delta^i_k$).

\begin{lemma}\label{lemma-8}
(1) $U_j=\phi^i_j V_i$, $V_j=\psi^i_j U_i$, $u^i=\psi^i_j v^j$,
$v^i=\phi^i_j u^j$, $\partial_{u^j}=\phi^i_j\partial_{v^i}$,
$\partial_{v^j}=\psi^i_j\partial_{u^i}$;

(2) $[U_i,U_j]=c^k_{ij}U_k$, $[V_i,V_j]=-c^k_{ij}V_k$,
$[\widetilde{U}_i,\widetilde{V}_j]=0$,
$[\widetilde{U}_i,\widetilde{U}_j]=c^k_{ij}\widetilde{U}_k$,
$[\widetilde{V}_i,\widetilde{V}_j]=-c^k_{ij}V_k$;

(3) $U_i(\psi_j^k)=\psi_j^lc^k_{li}=\phi_i^l\psi_p^k c_{jl}^p$,
$V_i(\phi_j^k)=\phi_j^lc^k_{il}=\phi_p^k\psi_i^l c_{lj}^p$;

(4) $\widetilde{U}_i=\phi^j_i\widetilde{V}_j+c^k_{li}u^l\partial_{u^k}$,
$\widetilde{V}_i=\psi^j_i\widetilde{U}_j-c^k_{li}v^l\partial_{v^k}$.

(5) $\widetilde{U}_j v^i=\widetilde{V}_j u^i=0$,
$\widetilde{U}_i u^j=c_{li}^j u^l$,
$\widetilde{V}_i v^j=-c_{li}^j v^l$;

(6) $[\widetilde{U}_i,\partial_{u^j}]=c^k_{ij}\partial_{u^k}$,
$[\widetilde{U}_i,\partial_{v^j}]=0$, $[\widetilde{V}_i,\partial_{u^j}]=0$,
$[\widetilde{V}_i, \partial_{v^j}]=-c^k_{ij}\partial_{v^k}$.
\end{lemma}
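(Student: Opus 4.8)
The plan is to establish the six groups of identities in the order listed, since each later part leans on the earlier ones. I would begin with part (1), which is pure Lie group theory: writing $U_j(g)=(L_g)_*e_j$ and $V_i(g)=(R_g)_*e_i$ and using $\Ad(g)=(L_g)_*\circ(R_{g^{-1}})_*$ at $e$, one gets $(L_g)_*e_j=(R_g)_*\Ad(g)e_j=\phi^i_j(R_g)_*e_i=\phi^i_jV_i(g)$, hence $U_j=\phi^i_jV_i$; inverting gives $V_j=\psi^i_jU_i$, and comparing the two presentations $w=u^iU_i=v^iV_i$ of a vector together with the chain rule yields the coordinate and $\partial$-relations. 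For part (2), $[U_i,U_j]=c^k_{ij}U_k$ is the definition of the bracket on $\g$, and $[V_i,V_j]=-c^k_{ij}V_k$ is the standard fact that right invariant fields realize the opposite bracket; the key point is $[U_i,V_j]=0$, which holds because the flow of $U_i$ is a right translation and that of $V_j$ a left translation, and left and right translations commute. Passing to the lifts is then immediate from Lemma \ref{lemma-11} together with the fact that the lift is linear over constant coefficients (the special case of Lemma \ref{lemma-6} with $V_i(f^j)=0$).

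For parts (3) and (4) I would differentiate the adjoint representation along the relevant one-parameter subgroups. Since $U_i$ generates the flow $R_{\exp(te_i)}$, differentiating $\Ad((g\exp(te_i))^{-1})=\Ad(\exp(-te_i))\Ad(g^{-1})$ at $t=0$ produces $-\mathrm{ad}(e_i)$ and gives $U_i(\psi^k_j)=\psi^l_jc^k_{li}$; symmetrically $V_i$ generates $L_{\exp(te_i)}$ and yields $V_i(\phi^k_j)=\phi^l_jc^k_{il}$. The two displayed forms in (3) are reconciled by the $\Ad$-automorphism identity $\phi^m_kc^k_{ij}=c^m_{pq}\phi^p_i\phi^q_j$ (equivalently $\psi^k_pc^p_{ab}=c^k_{mn}\psi^m_a\psi^n_b$). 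Part (4) is then obtained by feeding $U_i=\phi^j_iV_j$ into Lemma \ref{lemma-6}, which gives $\widetilde U_i=\phi^j_i\widetilde V_j+v^lV_l(\phi^j_i)\partial_{v^j}$; substituting (3) and then rewriting $v^l=\phi^l_pu^p$ and $\partial_{v^j}=\psi^k_j\partial_{u^k}$ via part (1), the product $\phi^l_p\phi^m_ic^j_{lm}\psi^k_j$ collapses against the automorphism identity to $c^k_{pi}$, so the vertical term simplifies to exactly $c^k_{li}u^l\partial_{u^k}$, and symmetrically for $\widetilde V_i$.

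Finally, for part (5) I would exploit that fiber coordinates are constant along the appropriate translations: the flow $R_{\exp(te_j)}$ of $U_j$ carries $V_i(g)$ to $V_i(gh)$, so the $v$-coordinates are preserved and $\widetilde U_jv^i=0$, and symmetrically $\widetilde V_ju^i=0$. Here one must transfer the invariance of the real flow to the holomorphic derivative, which is exactly what Lemmas \ref{lemma-4} and \ref{lemma-7} provide. The remaining identities $\widetilde U_iu^j=c^j_{li}u^l$ and $\widetilde V_iv^j=-c^j_{li}v^l$ then drop out of part (4). Part (6) I would verify by evaluating each bracket on the separating family of functions consisting of pullbacks from $G$ and the fiber coordinates: a bracket such as $[\widetilde U_i,\partial_{u^j}]$ annihilates every pullback, so it is vertical, and it sends $u^k$ to $c^k_{ij}$ by part (5), forcing $[\widetilde U_i,\partial_{u^j}]=c^k_{ij}\partial_{u^k}$; the other three brackets are handled the same way. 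The main obstacle is the bookkeeping in part (4): confirming that the change between the $u$- and $v$-frames leaves behind precisely the structure constants rests on the automorphism property of $\Ad(g)$, and throughout one must stay disciplined about passing between the real flows and lifts of Section 2.2 and their holomorphic counterparts, for which the preparatory lemmas are the essential tools.
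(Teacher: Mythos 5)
Your proposal is correct, and its skeleton coincides with the paper's: part (1) via the same $\mathrm{Ad}(g)=(L_g)_*\circ(R_{g^{-1}})_*$ computation, part (2) from the bi-invariant commutation $[U_i,V_j]=0$ together with Lemma \ref{lemma-11} (and linearity of the lift over constants), and part (4) by feeding the frame change into Lemma \ref{lemma-6} and collapsing the vertical term with the automorphism identity for $\mathrm{Ad}(g)$. You diverge from the paper in three local arguments, all valid. For (3), the paper never touches flows: it expands $0=[U_i,V_j]=[U_i,\psi^k_jU_k]$ using (2) and reads off $U_i(\psi^k_j)=\psi^l_jc^k_{li}$, whereas you differentiate $\mathrm{Ad}\bigl((g\exp(te_i))^{-1}\bigr)$ along the flow of $U_i^\circ$; your route works but should record explicitly that $\psi^k_j$ is a holomorphic function of $g$, so that the real directional derivative $U_i^\circ\psi^k_j$ computes the action of the complex field $U_i$ (the paper's algebraic route sidesteps this). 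For the tail of (5), you obtain $\widetilde{U}_iu^j=c^j_{li}u^l$ and $\widetilde{V}_iv^j=-c^j_{li}v^l$ by applying (4) together with $\widetilde{V}_mu^j=\widetilde{U}_mv^j=0$, while the paper computes $\widetilde{V}_iv^j=\widetilde{V}_i(\phi^j_ku^k)$ directly from (3); your version is marginally cleaner and equally rigorous. For (6), the paper substitutes (4) and brackets with $\partial_{u^j}$, using that $\phi^k_i$ depends only on the base point, while you characterize the bracket by its action on pullbacks from $G$ and on the fiber coordinates; this is complete once you note that the bracket of two holomorphic $(1,0)$ fields is again of type $(1,0)$, so vanishing on the $z$-pullbacks makes it vertical of the form $a^k\partial_{u^k}$, and evaluation on the $u^k$ (resp. $v^k$) then pins down the coefficients exactly as you state.
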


\begin{proof}Some statements in Lemma \ref{lemma-8} can be proved as follows. The others can be proved similarly or follows easily.

(1) For any $g\in G$,
$$(L_{g^{-1}})_* (V_j(g))=(L_{g^{-1}})_*\circ (R_g)_*(e_j)=\mathrm{Ad}(g^{-1})e_j=\psi_j^i e_i,$$
so we have $V_j=\psi_j^i U_i$.

(2)
By Lemma \ref{lemma-4}, $\{\widetilde{U^\circ_i},\widetilde{J(U^\circ_i)},\partial_{u^i_\Re},
\partial_{u^i_\Im},\forall i\}$ and
and $\{\widetilde{V^\circ_i},\widetilde{J(V^\circ_i)},\partial_{v^i_\Re},
\partial_{v^i_\Im}\}$ are left and right invariant frame on $TG$  respectively, in the real context \cite{Xu2022-1}. They are induced by the real basis $\{e_1,\cdots,e_n,\sqrt{-1}e_1,\cdots$, $\sqrt{-1}e_n\}$ of $\mathfrak{g}$, with the following bracket coefficients,
\begin{eqnarray*}
& &[e_i,e_j]=-[\sqrt{-1}e_i,\sqrt{-1}e_j]=\mathrm{Re}c_{ij}^k e_k+\mathrm{Im}c_{ij}^k \sqrt{-1}e_k, \\
& &[\sqrt{-1}e_i,e_j]=[e_i,\sqrt{-1}e_j]=-\mathrm{Im}c_{ij}^k
e_k+\mathrm{Re}c_{ij}^k \sqrt{-1}e_k.
\end{eqnarray*}
So we have
\begin{eqnarray*}
& &[U^\circ_i,U^\circ_j]=-[JU^\circ_i,JU^\circ_j]=\mathrm{Re}c_{ij}^k U^\circ_k+\mathrm{Im}c_{ij}^k JU^\circ_k, \\& &
[JU^\circ_i,U^\circ_j]=[U^\circ_i,JU^\circ_j]=-\mathrm{Im}c_{ij}^k U^\circ_k+
\mathrm{Re}c_{ij}^k JU^\circ_k,\\
& &[V^\circ_i,V^\circ_j]=-[JV^\circ_i,JV^\circ_j]=-\mathrm{Re}c_{ij}^k V^\circ_k-\mathrm{Im}c_{ij}^k JV^\circ_k,
\\& &
[JV^\circ_i,V^\circ_j]=[V^\circ_i,JV^\circ_j]= \mathrm{Im}c_{ij}^k V^\circ_k-
\mathrm{Re}c_{ij}^k JV^\circ_k.
\end{eqnarray*}

Since left invariant real tangent fields commute with right invariant real tangent fields, $U_i=\tfrac12(U_i^\circ-\sqrt{-1}JU_i^\circ)$ commutes with
$V_j=\tfrac12(V_j^\circ-\sqrt{-1}JV_j^\circ)$ by Lemma \ref{lemma-4}. More over, we have $\widetilde{U}_i$ commutes with $\widetilde{V}_j$ by Lemma \ref{lemma-11}.

By Lemma \ref{lemma-1} and Lemma \ref{lemma-3},
$$[U_i,U_j]^\circ=[U_i^\circ,U_j^\circ]=\mathrm{Re}c_{ij}^k U_k^\circ+\mathrm{Im}c_{ij}^k JU_k^\circ,
$$
which coincides with $(c_{ij}^k U_k)^\circ$, so we have $[U_i,U_j]=c_{ij}^k U_k$, and then Lemma \ref{lemma-11} provides
$[\widetilde{U}_i,\widetilde{U}_j]=c_{ij}^k\widetilde{U}_k$.

(3) Because
\begin{eqnarray*}
0=[U_i,V_j]=[U_i,\psi_j^kU_k]=\psi_j^k[U_i,U_k]+U_i(\psi_j^k)U_k=
(\psi_j^k c_{ik}^l+U_i(\psi_j^l))U_l,
\end{eqnarray*}
we get $U_i(\psi_j^k)=\psi_j^lc^k_{li}$, $\forall i,j,k$.
Since $\mathrm{Ad}(g^{-1}):\mathfrak{g}\rightarrow\mathfrak{g}$
is a Lie algebra isomorphism, i.e., $\psi_i^\alpha\psi_j^\beta c_{\alpha\beta}^\gamma=\psi_k^\gamma c_{ij}^k$, $U_i(\psi_j^k)$ can also be presented as $U_i(\psi_j^k)=\psi_j^lc^k_{li}=\phi_i^l\psi_p^k c_{jl}^p$.

(4)
By Lemma \ref{lemma-6}, the lift of $V_i=\psi_i^jU_j$ can be presented as
\begin{eqnarray*}
\widetilde{V}_i&=&\psi_i^j\widetilde{U}_j+u^j U_j(\psi_i^k)\partial_{u^k}
=\psi_i^j\widetilde{U}_j+u^j\psi_i^l c^k_{lj}\partial_{u^k}\\
&=&\psi_i^j\widetilde{U}_j+u^j\phi_j^l\psi_p^k c_{il}^p\partial_{u^k}
=\psi_i^j\widetilde{U}_j-c_{li}^k v^l \partial_{v^k}.
\end{eqnarray*}

(5) Since $\widetilde{V}_j$ generates (the tangent maps of) left translations on $T^{1,0}G$, which preserves all $U_i$, it also preserves all functions $u_i$, so we have $\widetilde{V}_j u^i=0$. Meanwhile, we have
$$\widetilde{V}_i v^j=\widetilde{V}_i(\phi^j_k u^k)=\widetilde{V}_i(\phi^j_k)u^k=\phi_k^l c_{il}^j u^k=-c_{li}^j v^l.
$$

(6) As in (5),  $[\widetilde{V}_i,\partial_{u^k}]=0$ can also be observed from the the left invariance of $u^k$. Notice that $\phi_i^k$ is only relevant to $z\in M$, so we have
$$[\widetilde{U}_i,\partial_{u^j}]=[\phi_i^k\widetilde{V}_k+
c^k_{li}u^l\partial_{u^k},\partial_{u^j}]=[
c^k_{li}u^l\partial_{u^k},\partial_{u^j}]=c^k_{ij}\partial_{u^k}.$$
The proof of Lemma \ref{lemma-8} is finished.
\end{proof}

Alternatively, we may use local coordinate $(z,w)=(z^1,\cdots,z^n,w^1,\cdots,w^n)$ to present these frames.
There exist $GL(n,\mathbb{C})$-valued holomorphic functions $(A^i_j)=(A^i_j(z))$,
$(B^i_j)=(B^i_j(z))=(A^i_j)^{-1}$, $(C^i_j)=(C^i_j(z))$ and $(D^i_j)=(D^i_j(z))=(C^i_j)^{-1}$, locally defined on $G$,
such that the following are satisfied,
\begin{eqnarray}
& &U_j=A^i_j\partial_{z^i}, \ \partial_{z^j}=B^i_j U_i,\
u^i=B^i_j w^j,\ w^i=A^i_j u^j, \ \partial_{u^j}=A^i_j\partial_{w^i}, \
\partial_{w^j}=B^i_j\partial_{u^i},\nonumber\\
& &V_j=C^i_j\partial_{z^i},\ \partial_{z^j}=D^i_j V_i,\
v^i=D^i_j w^j,\ w^i=C^i_j v^j, \ \partial_{v^j}=C^i_j\partial_{w^i}, \
\partial_{w^j}=D^i_j\partial_{v^i},\nonumber\\
& &A^i_jB^j_k=B^i_jA^j_k=
C^i_jD^j_k=D^i_jC^j_k=\delta^i_k, \
C_j^kB_k^i=\psi_j^i, \ A^k_jD_k^i=\phi_j^i.\label{016}
\end{eqnarray}
Lemma \ref{lemma-2} and Lemma \ref{lemma-6} provide
\begin{lemma}\label{lemma-12}
With respect to the local coordinate $(z,w)=(z^1,\cdots,z^n,w^1,\cdots,w^n)$ on $T^{1,0}G$, we have
\begin{eqnarray*}
\widetilde{U}_j=A^i_j\partial_{z^i}+w^i\partial_{z^i}A^k_j\partial_{w^k},& &
\partial_{z^j}=B^i_j\widetilde{U}_i+u^i U_i(B^k_j)\partial_{u^k},\\
\widetilde{V}_j=C^i_j\partial_{z^i}+w^i\partial_{z^i}C^k_j\partial_{w^k},
&\mbox{and}&
\partial_{z^j}=D^i_j\widetilde{V}_i+v^i V_i(D^k_j)\partial_{v^k},
\end{eqnarray*}
\end{lemma}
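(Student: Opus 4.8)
The plan is to obtain all four identities as direct substitutions into Lemma \ref{lemma-2} and Lemma \ref{lemma-6}, feeding in the holomorphic coordinate relations already collected in (\ref{016}). No genuine computation should be required beyond reading off coefficients; the content is entirely in matching each object to the correct hypothesis of one of the two preparation lemmas.

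First I would handle the expressions for $\widetilde{U}_j$ and $\widetilde{V}_j$. By (\ref{016}) we have $U_j=A^i_j\partial_{z^i}$ and $V_j=C^i_j\partial_{z^i}$, where the $A^i_j$ and $C^i_j$ are holomorphic functions on $G$. Applying Lemma \ref{lemma-2} with $f^i=A^i_j$ immediately gives $\widetilde{U}_j=A^i_j\partial_{z^i}+w^i\partial_{z^i}A^k_j\partial_{w^k}$, and the same lemma with $f^i=C^i_j$ gives $\widetilde{V}_j=C^i_j\partial_{z^i}+w^i\partial_{z^i}C^k_j\partial_{w^k}$. This disposes of the first and third formulas.

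For the two inverse formulas expressing $\partial_{z^j}$ I would first record the auxiliary fact that the lift of a coordinate field is the coordinate field on the total space: viewing $\partial_{z^j}$ as a holomorphic tangent field on $G$, its coefficients in the standard coordinate are the constants $\delta^i_j$, so Lemma \ref{lemma-2} yields $\widetilde{\partial_{z^j}}=\partial_{z^j}$ on $T^{1,0}G$. Now by (\ref{016}) we have $\partial_{z^j}=B^i_j U_i$, i.e. $\partial_{z^j}$ is the field $U=f^iU_i$ with $f^i=B^i_j$ relative to the holomorphic frame $\{U_i\}$. Applying Lemma \ref{lemma-6} to the frame $\{\widetilde{U}_i,\partial_{u^i}\}$ then gives $\widetilde{\partial_{z^j}}=B^i_j\widetilde{U}_i+u^iU_i(B^k_j)\partial_{u^k}$, and combining this with $\widetilde{\partial_{z^j}}=\partial_{z^j}$ produces the second formula. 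The fourth formula follows in exactly the same way from $\partial_{z^j}=D^i_jV_i$ together with Lemma \ref{lemma-6} for the frame $\{\widetilde{V}_i,\partial_{v^i}\}$.

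The only genuinely subtle step is the identification $\widetilde{\partial_{z^j}}=\partial_{z^j}$: one must recognize that Lemma \ref{lemma-6} is being invoked with the coordinate field $\partial_{z^j}$ in the role of $U$ and that its lift coincides with the coordinate field on $T^{1,0}G$, rather than with some more complicated object. Once this identification is in place, all four equalities are immediate, so I expect the write-up to be very short.
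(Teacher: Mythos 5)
Your proposal is correct and takes essentially the same route as the paper, which offers no written proof beyond the sentence ``Lemma \ref{lemma-2} and Lemma \ref{lemma-6} provide'' the result; your derivation fills in exactly that intended argument. In particular, your key observation that $\widetilde{\partial_{z^j}}=\partial_{z^j}$ (Lemma \ref{lemma-2} applied with the constant coefficients $\delta^i_j$) is precisely the bridging step needed to apply Lemma \ref{lemma-6} to $\partial_{z^j}=B^i_jU_i$ and $\partial_{z^j}=D^i_jV_i$.
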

Let $F=\sqrt{\mathrm{G}}$
be a left invariant complex Finsler metric on $G$. We use
$(\mathcal{G}_{i\overline{j}})$ to denote the complex Hessian matrix
$(\partial_{u^i}\partial_{\overline{u}^j}\G)$ and
$(\mathcal{G}^{\overline{i}j})$ to denote the inverse matrix
$(\mathcal{G}_{i\overline{j}})$, i.e., $\mathcal{G}_{k\overline{j}}\mathcal{G}^{\overline{j}i}=
\mathcal{G}^{\overline{i}j}\mathcal{G}_{j\overline{k}}=\delta^i_k$.

\begin{lemma}\label{lemma-13}
With respect to the local coordinate $(z,w)=(z^1,\cdots,z^n,w^1,\cdots,w^n)$ on $T^{1,0}G$, we have
\begin{eqnarray}
& &\G_{i\overline{j}}=B_i^p\overline{B_j^q}
\mathcal{G}_{p\overline{q}},\quad
\G^{\overline{i}j}=\overline{A^i_p} A^j_q
\mathcal{G}^{\overline{p}q},\nonumber\\
& &u^i\partial_{u^i}\mathcal{G}_{j\overline{k}}
=u^j\partial_{u^i}\mathcal{G}_{j\overline{k}}
=\overline{u}^i\partial_{\overline{u}^i}\mathcal{G}_{j\overline{k}}
=\overline{u}^k\partial_{\overline{u}^i}\mathcal{G}_{j\overline{k}}=0,
\nonumber\\
& &\widetilde{V}_i\mathcal{G}_{j\overline{k}}\equiv0,
\quad\mbox{and}\quad
\widetilde{V}_i\mathcal{G}^{\overline{j}{k}}\equiv0.\nonumber
\end{eqnarray}
\end{lemma}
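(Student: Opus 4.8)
The plan is to establish the three groups of identities separately, using only the coordinate dictionary (\ref{016}) — in which the transition functions $A^i_j,B^i_j$ are holomorphic in $z$ and independent of $w$ — together with the homogeneity of $F$ and Lemma \ref{lemma-5}.

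For the first pair I would argue by the chain rule for the biholomorphic change of coordinates $(z,w)\mapsto(z,u)$ given by $u^p=B^p_i(z)w^i$. Since this change is holomorphic and fixes $z$, (\ref{016}) gives the operator identities $\partial_{w^i}=B^p_i\partial_{u^p}$ and $\partial_{\overline w^j}=\overline{B^q_j}\,\partial_{\overline u^q}$. Because $B^p_i$ is independent of $w$ and $\overline{B^q_j}$ is annihilated by $\partial_{w^i}$, applying these operators in turn to $\G$ produces no extra terms and yields $\G_{i\overline j}=\partial_{w^i}\partial_{\overline w^j}\G=B^p_i\overline{B^q_j}\,\mathcal G_{p\overline q}$. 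The inverse relation $\G^{\overline i j}=\overline{A^i_p}A^j_q\,\mathcal G^{\overline p q}$ then follows from $(A^i_j)=(B^i_j)^{-1}$ by verifying $\G_{k\overline j}\G^{\overline j i}=\delta^i_k$, using $B^q_j A^j_r=\delta^q_r$ and $\mathcal G_{p\overline r}\mathcal G^{\overline r s}=\delta^s_p$.

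For the four homogeneity relations, the key observation is that left invariance forces $\G$ to depend on $u$ alone, while at a fixed point the passage $w\mapsto u$ is $\mathbb C$-linear; hence $\G$, read as a function of $u$, is the square of a complex Minkowski norm on $\g$. Thus these are exactly the homogeneity relations recorded in Section 2 (namely $\G_{ij\overline k}w^i=0$ and $\G_{i\overline j\overline k}w^j=0$, with their conjugates) with $w$ replaced by $u$. Concretely I would note that $\mathcal G_{j\overline k}=\partial_{u^j}\partial_{\overline u^k}\G$ is homogeneous of bidegree $(0,0)$ in $(u,\overline u)$, so Euler's relation gives $u^i\partial_{u^i}\mathcal G_{j\overline k}=\overline u^i\partial_{\overline u^i}\mathcal G_{j\overline k}=0$ at once, and then differentiate the identity $u^j\mathcal G_{j\overline k}=\mathcal G_{\overline k}$ once more in $u^i$ to get $u^j\partial_{u^i}\mathcal G_{j\overline k}=0$, with $\overline u^k\partial_{\overline u^i}\mathcal G_{j\overline k}=0$ following by conjugation.

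The third group is the main point. Lemma \ref{lemma-5} gives $\widetilde V_i\G\equiv0$, and the strategy is to push $\widetilde V_i$ through the two derivatives defining $\mathcal G_{j\overline k}=\partial_{u^j}\partial_{\overline u^k}\G$. Lemma \ref{lemma-8}(6) supplies $[\widetilde V_i,\partial_{u^j}]=0$; I still need $[\widetilde V_i,\partial_{\overline u^k}]=0$, and this is the one place where I expect a genuine subtlety. It holds because $\widetilde V_i$ is holomorphic (Lemma \ref{lemma-2}) while $\partial_{\overline u^k}=\overline{A^l_k}\,\partial_{\overline w^l}$ is anti-holomorphic with $w$-independent, anti-holomorphic coefficients, so each field annihilates the other's coefficients and the bracket vanishes. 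Granting both commutations, $\widetilde V_i\mathcal G_{j\overline k}=\partial_{u^j}\partial_{\overline u^k}(\widetilde V_i\G)=0$. Finally, applying the derivation $\widetilde V_i$ to $\mathcal G_{l\overline j}\mathcal G^{\overline j k}=\delta^k_l$ and using $\widetilde V_i\mathcal G_{l\overline j}=0$ gives $\mathcal G_{l\overline j}\,\widetilde V_i\mathcal G^{\overline j k}=0$, whence $\widetilde V_i\mathcal G^{\overline j k}=0$ after contracting with the inverse. Everything else is routine bookkeeping; the holomorphicity-driven commutation $[\widetilde V_i,\partial_{\overline u^k}]=0$ is the only step that genuinely uses the complex structure.
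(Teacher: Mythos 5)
Your proposal is correct and takes essentially the same route as the paper, whose proof is only three sentences: the first line by the chain rule through (\ref{016}), the second by the homogeneity of $\G$, and the third as a corollary of Lemma \ref{lemma-5} and Lemma \ref{lemma-8}(6). Your explicit holomorphicity argument for $[\widetilde{V}_i,\partial_{\overline{u}^k}]=0$ supplies the one step that the paper's citation of Lemma \ref{lemma-8}(6) leaves implicit, but it is exactly the intended reasoning.
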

\begin{proof} The first line is obvious. The second follows immediately after the homogeneity property. The third is a corollary of Lemma \ref{lemma-5} and (6) of Lemma \ref{lemma-8}.
\end{proof}
\label{subsection-invariant-frame}
\section{Proof of the main theorems}

Let $G$ be a complex Lie group and $F$ a left invariant complex
Finsler metric on $G$. We will use the invariant frames and the notations
in the Section \ref{subsection-invariant-frame} to
calculate the connection coefficients and prove the main theorems.

\subsection{Berwald property and complex spray}
First, we prove Theorem \ref{main-thm-1}.

By Lemma \ref{lemma-8}, Lemma \ref{lemma-12} and Lemma \ref{lemma-13}, we have
\begin{eqnarray}
N^i_k&=&\G^{\overline{l}i}\partial_{\overline{w}^l}\partial_{z^k}\G
=\overline{A^l_p}A^i_q\mathcal{G}^{\overline{p}q}\overline{B_l^r}
\partial_{\overline{u}^r}((D_k^a\widetilde{V}_a+
v^aV_a(D_k^b)\partial_{v^b})\G)\nonumber\\
&=&\overline{A^l_p}A^i_q\mathcal{G}^{\overline{p}q}\overline{B_l^r}
\partial_{\overline{u}^r}((
v^aV_a(D_k^b)\psi_b^c\partial_{u^c})\G)
=A^i_q\mathcal{G}^{\overline{r}q}
v^aV_a(D_k^b)\psi_b^c\mathcal{G}_{c\overline{r}}\nonumber\\
&=&
A^i_cv^aV_a(D_k^b)\psi_b^c,\label{014}
\end{eqnarray}
Notice that the last equality of (\ref{016}) implies $A^i_j\psi^j_k=C^i_k$,
so we have
\begin{eqnarray}\label{017}
N^i_k=C^i_bv^aV_a(D_k^b)\quad\mbox{and}\quad \Gamma^i_{j;k}=\partial_{w^j}N^i_k=C^i_b D^a_j V_a(D_k^b).
\end{eqnarray}
From (\ref{017}), we see that $\Gamma^i_{j;k}$ only depends on $z\in M$.
So $F$ is complex Berwald. The proof of Theorem \ref{main-thm-1}
is finished.

Next, we calculate the complex spray
$\chi=w^k\delta_{z^k}=w^k(\partial_{z^k}-N^i_k\partial_{w^i})$ on $T^{1,0}G\backslash 0$ and prove Theorem \ref{main-thm-4}.

By Lemma \ref{lemma-12} and (\ref{014}), we have
\begin{eqnarray*}
w^k\delta_{z^k}&=&w^k(D_k^a\widetilde{V}_a+v^aV_a(D_k^c)\partial_{v^c})-
w^kA^i_cv^aV_a(D_k^b)\psi_b^c\partial_{w^i}\\
&=&w^aC^k_a(D_k^b\widetilde{V}_b)+ w^kv^bV_b(D_k^c)\partial_{v^c}
-w^kC^i_bv^aV_a(D_k^b)\partial_{w^i}\\
&=&v^a\widetilde{V}_a-w^kv^a(V_a(D_k^b)C_b^i-C^i_b V_a(D_k^b))\partial_{w^i}=v^i\widetilde{V}_i.
\end{eqnarray*}
So $\chi$ can be extended to a holomorphic tangent field $v^i\widetilde{V}_i$ on $T^{1,0}G$.

By Lemma \ref{lemma-7}, the real part of $\chi=v^i\widetilde{V}_i$ is $\tfrac{1}2(v^i_\Re\widetilde{V_i}^\circ+v^i_\Im J(\widetilde{V_i}^\circ))=
\tfrac{1}2(v^i_\Re\widetilde{V^\circ_i}+v^i_\Im \widetilde{J(V^\circ)}$, we get
$(v^i\widetilde{V}_i)^\circ=v^i_\Re\widetilde{V}^\circ_i+v^i_\Im J(\widetilde{V^\circ})$. When $G$ is viewed as a real Lie group, $v^i_\Re\widetilde{V}^\circ_i+v^i_\Im J(\widetilde{V^\circ})$ is the canonical bi-invariant spray structure in Theorem A of \cite{Xu2022-1}.
By similar calculation and \cite{Xu2022-1},
the canonical bi-invariant spray structure on $G$ can also be presented as $(u^i\widetilde{U}_i)^\circ
=u^i_\Re\widetilde{U^\circ_i}+u^i_\Im \widetilde{J(U_i^\circ)}$. So we have
$(u^i\widetilde{U})^\circ=(v^i\widetilde{V})^\circ$, i.e., $u^i\widetilde{U}=v^i\widetilde{V}$, which ends the proof of
Theorem \ref{main-thm-4}.

\subsection{Strongly K\"{a}hler, K\"{a}hler and weak K\"{a}hler properties}
Without loss of generality, we assume in this section that the Lie group $G$ is connected.

Notice that $F$ is K\"{a}hler if and only if it is strongly K\"{a}hler \cite{CS2009}, which can be equivalently described by the vanishing of
$(\Gamma^i_{j;k}-\Gamma^i_{k;j})\partial_{z^i}$ everywhere. By (\ref{017}), we have
\begin{eqnarray*}
(\Gamma^i_{j;k}-\Gamma^i_{k;j})\partial_{z^i}=
(C^i_b D^a_j V_a(D_k^b)-C^i_b D^a_k V_a(D_j^b))\partial_{z^i}
=(D^a_j V_a(D_k^b)-D^a_k V_a(D_j^b))V_b.
\end{eqnarray*}
On the other hand, the first line of (\ref{016}) provides
\begin{eqnarray*}
0&=&[\partial_{z^j},\partial_{z^k}]=[D^a_jV_a,D^b_kV_b]\\
&=&D^p_jD^q_k[V_p,V_q]+(D^a_jV_a(D^b_k)-D^a_kV_a(D^b_j))V_b\\
&=&-D^p_jD^q_k c_{pq}^rV_r+(D^a_jV_a(D^b_k)-D^a_kV_a(D^b_j))V_b.
\end{eqnarray*}
So we have
$(\Gamma^i_{j;k}-\Gamma^i_{k;j})\partial_{z^i}=D^p_jD^q_k c_{pq}^rV_r$, which vanishes if and only if all bracket coefficients $c^k_{ij}$ of $\mathfrak{g}$ vanishes, i.e., $\mathfrak{g}$ is Abelian. To summarize, we have proved the equivalence between $F$ is K\"{a}hler and $\mathfrak{g}$ is Abelian. To finish the proof of Theorem \ref{main-thm-3}, we only need to assume $F$ is weakly K\"{a}hler and prove $\mathfrak{g}$ is Abelian.

Together with (\ref{016}), above calculation implies that, at $e\in G$, we have
\begin{eqnarray}
w^k(\Gamma^i_{j;k}-\Gamma^i_{k;j})\G_i=
w^kD^p_jD^q_k c_{pq}^r\partial_{v^r}\G=D^p_j v^q c_{pq}^r \partial_{v^r}\G=0.
\label{018}
\end{eqnarray}
If $F$ is viewed as a complex Minkowski norm on the complex $\mathfrak{g}$, then (\ref{018}) is equivalent
to the vanishing of the directional derivative
$[e_p,v]\G=v^q c_{pq}^r\partial_{v^r}\G$, for any $p$, at each point $v=v^ie_i\in\mathfrak{g}\backslash\{0\}$.
Notice that $\mathfrak{g}$ may be viewed as a real Lie algebra, i.e., $\mathfrak{g}_{\mathbb{R}}=T_eG$, and the correspondence $u^\circ\in T_eG=\mathfrak{g}_\mathbb{R}\mapsto u=\tfrac12(u^\circ-\sqrt{-1}J(u^\circ))\in T^{1,0}_eG=\mathfrak{g}$ is a real Lie algebra
isomorphism. So when $\G$ is viewed as a function on $\mathfrak{g}_\mathbb{R}$, (\ref{018}) implies $[e_p^\circ,v^\circ]\G=[J(e_p^\circ),v^\circ]\G=0$ for every $p$, at each $v^\circ\in\mathfrak{g}_\mathbb{R}\backslash \{0\}$. By this observation, we have verified that $F$ is an $\mathrm{Ad}(G)$-invariant function on $\mathfrak{g}_\mathbb{R}$.

Now we construct an $\mathrm{Ad}(G)$-invariant Euclidean norm on $\mathfrak{g}_\mathbb{R}$ as follows. Since the indicatrix $F=1$ in $\mathfrak{g}_{\mathbb{R}}$ is a sphere surrounding the origin, its convex hull $\mathcal{C}$ contains the origin as an internal point. Then there exists
a real Minkowski norm $F'$ such that the indicatrix $F'=1$ coincides with the boundary sphere $\partial \mathcal{C}$.
Using the Binet-Legendre transformation, we get an Euclidean norm $F''$ on $\mathfrak{g}_\mathbb{R}$. In each step of above construction, the $\mathrm{Ad}(G)$-invariance can be preserved. To summarize, the existence of an $\mathrm{Ad}(G)$-invariant Euclidean norm implies that $\mathfrak{g}_\mathbb{R}$ is a compact Lie algebra. But the complex Lie algebra $\mathfrak{g}$ can only be compact when it is Abelian.
The proof of Theorem \ref{main-thm-3} is finished.
\subsection{The vanishing of holomorphic sectional curvature}
To prove Theorem \ref{main-thm-3}, we need to use (\ref{016}), Lemma \ref{lemma-12} and Lemma \ref{lemma-13} to translate
\begin{eqnarray}\label{021}
w^i\overline{w}^j {w}^k\overline{w}^l
R_{i\overline{j}k\overline{l}}
=-w^i\overline{w}^j{w}^k\overline{w}^l
\G_{i\overline{j};k\overline{l}}
+
w^i\overline{w}^j {w}^k\overline{w}^l
\G^{\overline{q}{p}}\G_{i\overline{q};k}
\G_{p\overline{j};\overline{l}}.
\end{eqnarray}

First, we have
\begin{eqnarray}
& &w^i\overline{w}^j{w}^k\overline{w}^l
\G_{i\overline{j};k\overline{l}}=w^i\overline{w}^j{w}^k\overline{w}^l\partial_{z^k}\partial_{\overline{z}^l}
(B_i^p\overline{B_j^q}\mathcal{G}_{p\overline{q}})\nonumber\\
&=&w^i\overline{w}^j{w}^k\overline{w}^l\partial_{z^k}B_i^p\partial_{\overline{z}^l}\overline{B^q_j}
\mathcal{G}_{p\overline{q}}
+w^i\overline{w}^j{w}^k\overline{w}^lB^p_i\overline{B^q_j}\partial_{z^k}\partial_{\overline{z}^l}
\mathcal{G}_{p\overline{q}}\nonumber\\
& &+w^i\overline{w}^j{w}^k\overline{w}^l\partial_{z^k}B^p_i \overline{B^q_j}\partial_{\overline{z}^l}\mathcal{G}_{p\overline{q}}
+w^i\overline{w}^j{w}^k\overline{w}^lB^p_i\partial_{\overline{z}^l}\overline{B^q_j}\partial_{z^k}
\mathcal{G}_{p\overline{q}}.\label{020}
\end{eqnarray}

By Theorem \ref{main-thm-2} and
Lemma \ref{lemma-12}, we have
\begin{eqnarray}\label{022}
w^k\partial_{z^k}&=&w^k B_k^a\widetilde{U}_a+w^ku^aU_a(B^b_k)\partial_{u^b}\nonumber\\
&=&u^a\widetilde{U}_a
+w^k u^aU_a(B^b_k)\partial_{u^b}=v^a\widetilde{V}_a+w^k u^aU_a(B^b_k)\partial_{u^b},
\end{eqnarray}
so the second summand in the right of (\ref{020}) satisfies
\begin{eqnarray*}
& &w^i\overline{w}^j{w}^k\overline{w}^lB^p_i\overline{B^q_j}\partial_{z^k}
\partial_{\overline{z}^l}
\mathcal{G}_{p\overline{q}}
=\overline{u}^q\overline{w}^l\partial_{\overline{z}^l}(u^pw^k\partial_{z^k}
\mathcal{G}_{p\overline{q}})\\
&=&\overline{u}^q\overline{w}^l\partial_{\overline{z}^l}
(u^pv^a\widetilde{V}_a\mathcal{G}_{p\overline{q}}+w^k u^aU_a(B^b_k)u^p\partial_{u^b}\mathcal{G}_{p\overline{q}})=0,
\end{eqnarray*}
where we have applied Lemma \ref{lemma-13} for the last two equalities.

Next, using Lemma \ref{lemma-13}, we can translate the second summand in the right side of (\ref{021}) as
\begin{eqnarray}
& &w^i\overline{w}^jw^k\overline{w}^l
\G^{\overline{q}p}\G_{i\overline{q};k}\G_{p\overline{j};\overline{l}}\nonumber\\
&=&w^i\overline{w}^jw^k\overline{w}^l\overline{A^q_a}A^p_b\mathcal{G}^{\overline{a}b}
\partial_{z^k}(B_i^s\overline{B^t_q}\mathcal{G}_{s\overline{t}})
\partial_{\overline{z}^l}(B_p^\alpha\overline{B_j^\beta}
\mathcal{G}_{\alpha\overline{\beta}})\nonumber\\
&=&w^i\overline{w}^jw^k\overline{w}^l\mathcal{G}^{\overline{a}b}
\partial_{z^k}(B_i^s\mathcal{G}_{s\overline{a}})
\partial_{\overline{z}^l}(\overline{B_j^\beta}
\mathcal{G}_{b\overline{\beta}})\nonumber\\
&=&w^i\overline{w}^jw^k\overline{w}^l
\partial_{z^k}B^s_i\partial_{\overline{z}^l}\overline{B^\beta_j}
\mathcal{G}_{s\overline{\beta}}
+w^i\overline{w}^jw^k\overline{w}^l
\partial_{z^k}B^b_i \overline{B^\beta_j}\partial_{\overline{z}^l}
\mathcal{G}_{b\overline{\beta}}\nonumber\\
& &+w^i\overline{w}^jw^k\overline{w}^l
B^s_i\partial_{\overline{z}^l}\overline{B^a_j}
\partial_{z^k}\mathcal{G}_{s\overline{a}}
+w^i\overline{w}^jw^k\overline{w}^l
B^s_i\overline{B^\beta_j}\mathcal{G}^{\overline{a}b}
\partial_{z^k}\mathcal{G}_{s\overline{a}}\partial_{\overline{z}^l}
\mathcal{G}_{b\overline{\beta}}.\label{023}
\end{eqnarray}

The first three summands in the right side of (\ref{023}) cancel
the first, the third and the fourth summands in the side of (\ref{020}).
So we have
$
w^i\overline{w}^j {w}^k\overline{w}^l
R_{i\overline{j}k\overline{l}}
=w^i\overline{w}^jw^k\overline{w}^l
B^c_i\overline{B^d_j}\mathcal{G}^{\overline{a}b}
\partial_{z^k}\mathcal{G}_{c\overline{a}}\partial_{\overline{z}^l}
\mathcal{G}_{b\overline{d}}$. Using (\ref{016}), Lemma \ref{lemma-13} and (\ref{022}) again, we get
\begin{eqnarray*}& &
w^i\overline{w}^j {w}^k\overline{w}^l
R_{i\overline{j}k\overline{l}}
=w^i\overline{w}^jw^k\overline{w}^l
B^c_i\overline{B^d_j}\mathcal{G}^{\overline{a}b}
\partial_{z^k}\mathcal{G}_{c\overline{a}}\partial_{\overline{z}^l}
\mathcal{G}_{b\overline{d}}\\&
=&u^c\overline{u}^d \mathcal{G}^{\overline{a}b}
(v^r\widetilde{V}_r(\mathcal{G}_{c\overline{a}})
+w^ru^sU_s(B_r^t)\partial_{u^t}\mathcal{G}_{c\overline{a}})
\overline{({v}^\alpha\widetilde{V}_\alpha(\mathcal{G}_{d\overline{b}})+
w^\alpha u^\beta U_\beta(B^\gamma_\alpha)\partial_{u^\gamma}\mathcal{G}_{d\overline{b}})}\\
&=&\mathcal{G}^{\overline{a}b}
(w^ru^sU_s(B_r^t)u^c\partial_{u^t}\mathcal{G}_{c\overline{a}})
\overline{(
w^\alpha u^\beta U_\beta(B^\gamma_\alpha)u^d\partial_{u^\gamma}\mathcal{G}_{d\overline{b}})}
=0.
\end{eqnarray*}
The proof of Theorem \ref{main-thm-4} is finished.
\section{Appendix: complete lift of a real tangent field}
Let $M$ be a smooth real manifold with $\dim M=n$ and $V$ a smooth real tangent field on $M$.
The local coordinate presentation for the lift $\widetilde{V}$ of $V$ to $M$ appears in \cite{XD2014} and \cite{Xu2022-1}. To be self contained, we present a proof here.


\begin{lemma}\label{lemma-9}
For $V=f^i\partial_{x^i}$, we have $\widetilde{V}=f^i\partial_{x^i}+y^i\partial_{x^i}f^j\partial_{y^j}$.
\end{lemma}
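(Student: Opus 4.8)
The plan is to work directly from the definition of the lift via the flow of $V$. First I would denote by $\rho_t$ the local one-parameter group of diffeomorphisms generated by $V=f^i\partial_{x^i}$, characterized in the chosen chart by the flow equation $\tfrac{\mathrm d}{\mathrm dt}\rho_t^j(x)=f^j(\rho_t(x))$ together with $\rho_0=\mathrm{id}$. Since the lift is by definition $\widetilde V=\tfrac{\mathrm d}{\mathrm dt}\big|_{t=0}(\rho_t)_*$, the entire computation reduces to writing the tangent map $(\rho_t)_*$ in the standard coordinates $(x,y)$ on $TM$ and then differentiating at $t=0$.

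The key step is the coordinate expression for the tangent map. A point $(x,y)\in TM$ represents the tangent vector $y^i\partial_{x^i}$ at $x$; under $(\rho_t)_*$ the base point moves to $\rho_t(x)$ while the fiber coordinate is transformed by the Jacobian of $\rho_t$, giving
$$(\rho_t)_*(x,y)=\Big(\rho_t(x),\ \partial_{x^i}\rho_t^j(x)\,y^i\Big).$$
Differentiating the base component at $t=0$ yields $f^j(x)$ immediately, from $\rho_0=\mathrm{id}$ and the flow equation. For the fiber component I would differentiate $\partial_{x^i}\rho_t^j(x)\,y^i$ in $t$ at $t=0$, which amounts to evaluating the variational equation satisfied by the Jacobian $\Phi^j_i(t,x)=\partial_{x^i}\rho_t^j(x)$.

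The one point that needs care, and which I expect to be the main obstacle, is the interchange of the $t$-derivative and the $x^i$-derivative, namely
$$\tfrac{\mathrm d}{\mathrm dt}\big|_{t=0}\,\partial_{x^i}\rho_t^j
=\partial_{x^i}\big(\tfrac{\mathrm d}{\mathrm dt}\big|_{t=0}\rho_t^j\big)
=\partial_{x^i}f^j.$$
This is justified by the smooth dependence of the flow on initial conditions; equivalently, differentiating the flow equation in $x^i$ produces the variational equation $\tfrac{\mathrm d}{\mathrm dt}\Phi^j_i=\partial_{x^k}f^j(\rho_t(x))\,\Phi^k_i$ with initial value $\Phi^j_i(0,x)=\delta^j_i$, and reading it off at $t=0$ gives $\partial_{x^i}f^j$. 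Either route shows the fiber component equals $y^i\partial_{x^i}f^j$, and assembling the two pieces yields $\widetilde V=f^i\partial_{x^i}+y^i\partial_{x^i}f^j\partial_{y^j}$, as claimed.
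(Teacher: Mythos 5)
Your proposal is correct and follows essentially the same route as the paper: both compute $\widetilde{V}=\tfrac{\mathrm d}{\mathrm dt}\big|_{t=0}(\rho_t)_*$ in the standard coordinates $(x,y)$ and evaluate the variational equation of the flow at $t=0$, the paper realizing your Jacobian $\Phi^j_i(t,x)=\partial_{x^i}\rho_t^j(x)$ applied to $y$ as the $s$-derivative $c'(t)=\partial_s|_{s=0}c(x(s),t)$ along a curve $x(s)$ representing $y$. Your explicit justification of the interchange of the $t$- and $x^i$-derivatives via smooth dependence on initial conditions is the same point the paper handles implicitly when it applies $\partial_s|_{s=0}$ to the flow equation.
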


\begin{proof}
First we briefly recall how $\widetilde{V}$ is defined. The tangent field $V$ generates a flow of local diffeomorphisms $\rho_t$ on $M$. Next, the tangent maps $(\rho_t)_*$ provide a flow of local diffeomorphisms on $TM$. At last, $\widetilde{V}=\frac{{\rm d}}{{\rm d}t}(\rho_t)_*$.

Now we impose the local coordinate
$x=(x^1,\cdots,x^n)\in$ where $V$ is defined. When $t$ sufficiently close to $0$, we denote by $c(x,t)=(c^1(x,t),\cdots,c^n(x,t))$ the integral curve of $V$, which is the solution of
\begin{eqnarray}\label{009}
\partial_{t} c^i(x,t)=f^i(c(x,t)),\ \forall i,\quad \mbox{and}\quad c(x,0)=x.
\end{eqnarray}
Replace $x$ in (\ref{009}) with a smooth curve $x(s)$ which represents the tangent vector $\frac{{\rm d}}{{\rm d}s}x(0)=y=(y^i,\cdots,y^n)$ at $x=x(0)$,
and then apply $\partial_s|_{s=0}$ to (\ref{009}),  we get
\begin{eqnarray}\label{010}
\frac{\rm d}{{\rm d}t} c'{}^i(t)= c'{}^j(t)\partial_{c^j}f^i(c(x,t)),\ \forall i,\quad\mbox{and}\quad c'(0)=y,
\end{eqnarray}
with ${c}'(t)=\partial_s|_{s=0}c(x(s),t)$.
In particular, $\frac{{\rm d}}{{\rm dt}}|_{t=0}c'{}^i(t)=y^j\partial_{x^j}f^i(x)$. So
\begin{eqnarray*}
\widetilde{V}(x,y)&=&\frac{{\rm d}}{{\rm d}t}|_{t=0}(\rho_t)_*(x,y)=(\frac{{\rm d}}{{\rm d}t}|_{t=0}c(t),\frac{{\rm d}}{{\rm d}t}|_{t=0}c'(t))\\&=&
(y^1,\cdots,y^n,y^j\partial_{x^j}f^1,\cdots,y^j\partial_{x^j}f^n)
\end{eqnarray*}
has the local presentation $\widetilde{V}=y^i\partial_{x^i}+y^j\partial_{x^j}V^i\partial_{y^i}$
\end{proof}

Fundamental knowledge in Lie theory implies that lifts commute with the bracket. Here we present an explicit proof in the context.

\begin{lemma}\label{lemma-10}
For smooth real tangent fields $U$ and $V$ on $M$, we have
$[\widetilde{U},\widetilde{V}]=\widetilde{[U,V]}$.
\end{lemma}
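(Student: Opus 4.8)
The plan is to prove the identity by a direct computation in a local chart, using the explicit formula for the lift supplied by Lemma \ref{lemma-9}. Both the bracket of vector fields and the lift $\cdot\mapsto\widetilde{\cdot}$ are local operators (the lift being globally well defined via the flow construction, and given in each chart by Lemma \ref{lemma-9}), so it suffices to verify the identity in an arbitrary coordinate chart; the global statement then follows by patching. First I would fix coordinates $x=(x^1,\dots,x^n)$ on $M$ with the induced coordinates $(x,y)=(x^1,\dots,x^n,y^1,\dots,y^n)$ on $TM$, and write $U=a^i\partial_{x^i}$, $V=b^i\partial_{x^i}$. Lemma \ref{lemma-9} then gives
\[
\widetilde{U}=a^i\partial_{x^i}+y^j(\partial_{x^j}a^i)\partial_{y^i},\qquad
\widetilde{V}=b^i\partial_{x^i}+y^j(\partial_{x^j}b^i)\partial_{y^i}.
\]
Setting $c^i=a^j\partial_{x^j}b^i-b^j\partial_{x^j}a^i$ for the components of $[U,V]$, Lemma \ref{lemma-9} likewise yields $\widetilde{[U,V]}=c^i\partial_{x^i}+y^j(\partial_{x^j}c^i)\partial_{y^i}$, and the goal is to show that $[\widetilde{U},\widetilde{V}]$ has exactly these components.

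The computation splits into the $\partial_{x^i}$ part and the $\partial_{y^i}$ part. For the horizontal part, since $a^i$ and $b^i$ depend on $x$ alone, the $\partial_{y^k}$ terms of $\widetilde{U}$ and $\widetilde{V}$ annihilate them, so the $\partial_{x^i}$ component of $[\widetilde{U},\widetilde{V}]$ reduces immediately to $a^j\partial_{x^j}b^i-b^j\partial_{x^j}a^i=c^i$, matching $\widetilde{[U,V]}$. For the vertical part I would apply $\widetilde{U}$ to $y^j\partial_{x^j}b^i$ and $\widetilde{V}$ to $y^j\partial_{x^j}a^i$ and subtract. Expanding by the product rule (and using $\partial_{y^k}(y^j\partial_{x^j}b^i)=\partial_{x^k}b^i$) produces, on the one hand, second-derivative terms $a^ky^j\partial_{x^k}\partial_{x^j}b^i-b^ky^j\partial_{x^k}\partial_{x^j}a^i$, and on the other hand first-derivative cross terms $y^l(\partial_{x^l}a^k)(\partial_{x^k}b^i)-y^l(\partial_{x^l}b^k)(\partial_{x^k}a^i)$. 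Expanding $y^j\partial_{x^j}c^i$ once more by the product rule, one sees that the two expressions agree term by term after relabeling summation indices.

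The only point requiring care is the bookkeeping of these vertical components: one must check that the second-order derivatives of $a$ and $b$ coming from differentiating $y^j\partial_{x^j}b^i$ along the horizontal part of $\widetilde{U}$ coincide with those appearing in $y^j\partial_{x^j}c^i$. This matches precisely because mixed partials commute, so $a^k\partial_{x^k}\partial_{x^j}b^i$ equals the term $a^k\partial_{x^j}\partial_{x^k}b^i$ arising from differentiating $c^i$; everything else is immediate. I expect this index-matching to be the main, though entirely routine, obstacle. (Alternatively, one may argue without coordinates: the lift is characterized by $\widetilde{V}(g\circ\pi)=(Vg)\circ\pi$ and $\widetilde{V}(\mathrm{d}g(\cdot))=\mathrm{d}(Vg)(\cdot)$ for all $g\in C^\infty(M)$, where $\pi:TM\to M$ is the projection, and the differentials of these two families of functions span $T^*(TM)$ at every point. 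Checking that $[\widetilde{U},\widetilde{V}]$ satisfies the same two relations, which follows by applying each characterizing relation twice, forces $[\widetilde{U},\widetilde{V}]=\widetilde{[U,V]}$.)
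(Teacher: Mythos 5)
Your proposal is correct and follows essentially the same route as the paper's own proof: a direct local-coordinate computation, using the explicit lift formula of Lemma \ref{lemma-9} to expand both $[\widetilde{U},\widetilde{V}]$ and $\widetilde{[U,V]}$ and matching the horizontal and vertical components, with the second-derivative terms agreeing by commutativity of mixed partials. Your parenthetical coordinate-free alternative is a valid extra, but the main argument is the paper's argument.
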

\begin{proof}Assume the following local presentations, $U=f^i\partial_{x^i}$ and $V=g^i\partial_{x^i}$. Then the lift of $[U,V]=(f^i\partial_{x^i}g^j-g^i\partial_{x^i}f^j)\partial_{x^j}$
is given by
\begin{eqnarray}
\widetilde{[U,V]}&=&[(f^i\partial_{x^i}g^j-g^i\partial_{x^i}f^j)\partial_{x^j}
+y^k\partial_{x^k}(f^i\partial_{x^i}g^j-g^i\partial_{x^i}f^j)\partial_{y^j}]\nonumber
\\
&=&(f^i\partial_{x^i}g^j-g^i\partial_{x^i}f^j)\partial_{x^j}
+y^k(\partial_{x^k}f^i\partial_{x^i}g^j-\partial_{x^k}g^i\partial_{x^i}f^j)
\nonumber\\
& &+y^k(f^i\partial_{x^k}\partial_{x^i}g^j-
g^i\partial_{x^k}\partial_{x^i}f^j)\partial_{y^j}.\label{011}
\end{eqnarray}
On the other hand,
\begin{eqnarray}
[\widetilde{U},\widetilde{V}]
&=&[f^i\partial_{x^i}+y^k\partial_{x^k}f^j\partial_{y^j},
g^i\partial_{x^i}+y^k\partial_{x^k}g^j\partial_{y^j}]\nonumber\\
&=&(f^i\partial_{x^i}g^j-g^i\partial_{x^i}f^j)\partial_{x^j}+
y^k(f^i\partial_{x^i}\partial_{x^k}g^j-g^i\partial_{x^i}\partial_{x^k}f^j)
\nonumber\\
& &+y^k(\partial_{x^k}f^i\partial_{x^i}g^j-\partial_{x^k}g^i\partial_{x^i}f^j)
\partial_{y^j}.\label{012}
\end{eqnarray}
Comparing (\ref{011}) and (\ref{012}), Lemma \ref{lemma-10} is proved.
\end{proof}
\noindent

{\bf Acknowledgement}\quad
This paper is supported by National Natural Science Foundation of China (No. 12131012). The authors sincerely thank Liyou Zhang and Chunping Zhong for helpful discussions.

\end{document}